\documentclass{article}
\usepackage[latin1]{inputenc}

\usepackage{amsfonts}
\usepackage{amssymb}
\usepackage{amsthm}
\usepackage{amsmath}
\usepackage{mathrsfs}
\usepackage{stmaryrd}
\usepackage[latin1]{inputenc}
\usepackage[all]{xy}

\newcommand{\rac}{\mathbb Q}

\newcommand{\com}{\mathbb C}

\newcommand{\lin}{\textrm{lin}}
\newcommand{\zze}{\mathbb{Z}}

\newcommand{\zz}{\mathcal{Z}}
\newcommand{\lae}{\varepsilon}
\newcommand{\cp}{\mathcal P}

\newcommand{\bres}{\textrm{Res}}

\newcommand{\binf}{\textrm{Inf}}
\newcommand{\biso}{\textrm{Iso}}

\newcommand{\oviz}[1]{\overleftarrow{#1}}
\newcommand{\ovder}[1]{\overrightarrow{#1}}
\newcommand{\ind}[2]{\!\uparrow_{#1}^{#2}}

\theoremstyle{plain}
\newtheorem{teo}{Teorema}[section]
\newtheorem{prop}[teo]{Proposition}
\newtheorem{coro}[teo]{Corollary}
\newtheorem{lema}[teo]{Lemma}
\newtheorem{conj}[teo]{Conjecture}

\theoremstyle{definition}
\newtheorem{defi}[teo]{Definition}
\newtheorem{nota}[teo]{Notation}

\theoremstyle{remark}
\newtheorem{ejem}[teo]{Example}

\author{Nadia Romero\\
\begin{small}
\texttt{nadiaro$\, $@$\, $ciencias.unam.mx}\end{small}
}
\title{Simple modules over Green biset functors}
\date{ }

\begin{document}

\maketitle

\begin{abstract} 
 We present three examples of Green biset functors for which their simple modules can be parametrized. These are particular cases of a conjecture by Serge Bouc classifying the simple modules over a Green biset functor $A$, that generalizes the classification of simple biset functors. We also prove this conjecture under certain hypothesis for $A$.  
\end{abstract}

\section{Introduction}

 A Green functor for a finite group $G$ is defined as a Mackey functor $A$ with an additional multiplicative structure on each $A(H)$, for $H$ subgroup of $G$, compatible with the structure of Mackey functor. In the context of Mackey functors, Green functors have been extensively studied and many examples and applications of them have been found (see for example Thévenaz \cite{bthev}, Bouc \cite{bGfun} and Panchadcharam and Street \cite{elango}). In the context of categories of biset functors, it has been proved by Serge Bouc that if the class of objects in the biset category is closed under direct products, then the category of biset functors defined on it has a symmetric monoidal structure given by tensor product, the identity element is the Burnside functor. A Green biset functor $A$ is then a monoid in this category. 
That is, $A$ is a biset functor compatible with the monoidal structures of the biset category and that of $R$-Mod, when $R$ is a commutative ring with unity. This means that $A$ is equipped with bilinear products from $A(G)\times A(H)$ to $A(G\times H)$, for finite groups $G$ and $H$, which have a unit element and are associative and functorial in a natural sense. One feature of this practical definition is that it allows us to observe that many known Green functors are Green biset functors too.  

The aim of this article is towards the classification of simple Green biset functors, the results are part of my Ph.D. dissertation \cite{tesis}. As it happens with classical Green functors, the concept of module over a Green biset functor can be defined, and, as we will see in the next section, biset functors are modules over the Burnside functor. If $A$ is a Green biset functor, a \textit{left ideal} of $A$ is a submodule of the $A$-module $A$. Similarly one defines a \textit{right ideal}, and then a \textit{two sided ideal}. $A$ is called simple if its only two sided ideals are $\{0\}$ and $A$. One natural step is then, the classification of simple modules over Green biset functors.

In order to study the modules over a Green biset functor $A$, Bouc introduces a category $\cp_A$ such that $A$-modules correspond to $R$-linear functors from $\cp_A$ to $R$-Mod. 
The class of objects of $\cp_A$ is the same class of groups on which $A$ is defined, and if $G$ and $H$ are groups in $\cp_A$, then $Hom_{\cp_A}(G,\, H)=A(H\times G)$. Certain composition $\circ$ is then introduced for these arrows, it is described in the following section. It is a conjecture of Bouc that the simple modules over a Green biset functor $A$ are in correspondence with couples $(H,\, V)$ for which $H$ is a group such that the quotient algebra $\hat{A}(H)$ is different from zero and $V$ is a simple $\hat{A}(H)$-module. Here $\hat{A}(H)$ denotes the quotient of $A(H\times H)$ over the submodule generated by elements that can be factored through $\circ$ by groups strictly smaller than $H$. The classification of simple biset functors turns out to be a particular case of this conjecture. We present here other three examples which satisfy it and we prove it under the assumption that minimal groups for simple $A$-modules are unique up to isomorphism. The examples are:
 
\begin{enumerate}
\item The functor of rational representations with coefficients in a field of characteristic zero, $kR_{\rac}$. Modules over $kR_{\rac}$ are also known as rhetorical biset functors and simple modules were already classified by Laurence Barker \cite{barker}. We present a different proof of this classification, using the fact that $kR_{\rac}$ is an homomorphic image of the Burnside functor $kB$. 

\item The functor of complex representations with coefficients in the complex field,  $\com R_{\com}$. In this case, $\com R_{\com}$ is the only simple $\com R_{\com}$-module, and so it is a simple Green biset functor.

\item The Yoneda-Dress construction at a group $C$ of prime order of the Burnside functor, $RB_{C}$. 
\end{enumerate}

\section{Notation and settings}
By a group we will always mean a finite group. If $G$ is a group, by a $G$-set, we will always mean a finite $G$-set.

Let $\zz$ be a class of groups. We will say that $\zz$ is closed under subquotients if given a group $G$ in $\zz$, then  the image of any morphism of groups from a subgroup  of $G$ belongs to $\zz$. We will say that $\zz$ is closed under direct products if given $G$ and $H$ in $\zz$, their direct product $G\times H$ is in $\zz$.

The definitions appearing in this section, as well as more results and theory around them, can be found in Bouc \cite{biset}.

We will write $B(H,\, G)$ for the Grothendieck group of the category of finite $(H,\, G)$-bisets. If $R$ is a commutative ring with identity, we will write $RB(H,\, G)$ for $R\otimes_{\zze} B(H,\, G)$. Given $U$ an $(H,\, G)$-biset and $V$ a $(K,\, H)$-biset, the composition of $V$ and $U$, denoted by $V\times_H U$ is the set of $H$-orbits on the cartesian product $V\times U$, where the right action of $H$ is defined by
\begin{displaymath}
 \forall (v,\, u)\in V\times U,\, \forall h\in H,\ (v,\, u)\cdot h=(v\cdot h,\, h^{-1}\cdot u). 
\end{displaymath}
This composition has a natural structure of $(K,\, G)$-biset.

\begin{nota}
Let $\zz$ be a class of groups closed under subquotients. We will write $\Omega_{R,\, \zz}$ for the biset category with objects in $\zz$. This means that if $G$ and $H$ are in $\zz$, then $Hom_{\Omega_{R,\, \zz}}(G,\, H)=RB(H,\, G)$. The composition of morphism is
given by the map
\begin{displaymath}
\_\, \times_H\,\_:RB(K,\, H)\times RB(H,\, G)\longrightarrow RB(K,\, G).
\end{displaymath}
The identity element in $RB(G,\, G)$ is the $(G,\, G)$-biset $G$.

If $\zz$ is the class of all finite groups, we will write $\Omega_R$ instead of $\Omega_{R,\, \zz}$.
\end{nota}

If $H$ and $G$ are groups and $L\leqslant H\times G$, then the corresponding element in $RB(H,\, G)$ satisfies the Bouc decomposition (2.3.26 in \cite{biset}):
\begin{displaymath}
 \textrm{Ind}_D^H\times_D \textrm{Inf}_{D/C}^D\times_{D/C} \textrm{Iso}(f)\times_{B/A} \textrm{Def}_{B/A}^B\times_B \textrm{Res}_B^G
\end{displaymath}
with $C\trianglelefteqslant D\leqslant H$, $A\trianglelefteqslant B\leqslant G$ and $f:B/A\rightarrow D/C$ a group isomorphism. 

The biset functors we will consider are $R$-linear functors from a category $\Omega_{R,\, \zz}$ to the category $R$-Mod. The Burnside  functor defined in $\Omega_{R,\, \zz}$ is denoted by $RB$. In the case of Green biset functors, we will always assume that $\zz$ is closed under direct products. 

\begin{nota}
Let $G$, $G'$, $H$ and $H'$ be groups. If $U$ is an $(H,\, G)$-biset and $U'$ is an $(H',\, G')$-biset, then the cartesian product $U\times V$ has a natural structure of 
$(H\times H',\, G\times G')$-biset. By linearity, this construction yields a map 
\begin{displaymath}
RB(H,\,G)\times RB(H',\, G')\rightarrow RB(H\times H',\, G\times G'),
\end{displaymath}
also denoted by $(\alpha,\, \beta)\mapsto \alpha\times \beta$.

If $\alpha$ is any element in $RB(H,\, G)$ and $K$ is a group, $\alpha \times K$ will refer to this construction applied to $\alpha$ and the $(K,\, K)$-biset $K$.
\end{nota}

The following definition of Green biset functor is 8.5.1 in \cite{biset}, it is given in terms of the tensor product of bisets functors, defined in 8.4.1 of the same reference. 

\begin{defi}
\label{defgten}
Let $\zz$ be a class of groups closed under subquotients and direct products. A biset functor $A$ defined in $\Omega_{R,\, \zz}$ is a Green biset functor if there exist maps of biset functors
\begin{displaymath}
 \mu:A\otimes A\rightarrow A,\quad \textrm{y}\quad e:RB\rightarrow A
\end{displaymath}
such that the following diagrams commute
\[
 \xymatrix @R=.1in @C=.3in {
A\otimes (A\otimes A)\ar[dd]_{\cong}\ar[r]^-{Id\otimes \mu} & A\otimes A\ar[rd]^-{\mu}\\
 & & A\quad\\
(A\otimes A)\otimes A\ar[r]^-{\mu\otimes Id} & A\otimes A\ar[ru]_-{\mu}
}\quad
\xymatrix @R=.4in @C=.3in {
RB\otimes A\ar[r]^-{e\otimes Id}\ar[rd]_-{\cong} & A\otimes A\ar[d]^-{\mu} & A\otimes RB\ar[l]_-{Id\otimes e}\ar[ld]^-{\cong}\\
& A & \quad \quad\quad . 
} 
\]
\end{defi}

It is shown in Section 8.5 of \cite{biset} that this definition is equivalent to the following. 

\begin{defi}
\label{defgreen}
$A$ is a Green biset functor if it is a biset functor quipped with
bilinear products $A(G)\times A(H)\rightarrow A(G\times H)$ denoted by $(a,\, b)\mapsto a\times b$, for groups $G,\, H\in \zz$, and an element $\lae_A\in A(1)$, satisfying the following conditions:
\begin{itemize}
 \item Associativity. Let $G$, $H$ and $K$ be groups in $\zz$. If
 $\alpha_{G,\, H,\, K}$ is the canonical group isomorphism from $G\times (H\times K)$ to $(G\times H)\times K$, then for any $a\in A(G)$, $b\in A(H)$ and $c\in A(K)$
\begin{displaymath}
 (a\times b)\times c=A(\textrm{Iso}(\alpha_{G,\, H,\, K}))(a\times (b\times c)).
\end{displaymath}
\item Identity element. Let $G$ be a group in $\zz$. Let $\lambda_G:1\times G\rightarrow G$  and $\rho_G: G\times 1 \rightarrow G$ denote the canonical group isomorphisms. Then for any $a\in A(G)$
\begin{displaymath}
 a=A(\textrm{Iso}(\lambda_G))(\lae_A\times a)=A(\textrm{Iso}(\rho_G))(a\times \lae_A).
\end{displaymath}
\item Functoriality. If $\varphi: G\rightarrow G'$ and $\psi: H\rightarrow H'$ are morphisms in $\Omega_{R,\, \zz}$, then for any $a\in A(G)$ and $b\in A(H)$
\begin{displaymath}
 A(\varphi \times \psi)(a\times b)=A(\varphi)(a)\times A(\psi)(b).
\end{displaymath}
 \end{itemize}
\end{defi}

\begin{ejem}
 Assigning to a group $G$ the Burnside ring $B(G)$ defines a biset functor in $\Omega_{\mathbb{Z}}$.  
In order to define the products $B(G)\times B(H)\rightarrow B(G\times H)$, let $a$ be in $B(G)$ and $b$ be in $B(H)$. Then $B(\textrm{Inf}_G^{\,G\times H})(a)$ is in $B(G\times H)$ and 
$B(\textrm{Inf}_H^{\,G\times H})(b)$ is also in $B(G\times H)$, and so $a\times b$ is given by the natural multiplication of these two elements in $B(G\times H)$. In other words, this product is induced by the bifunctor sending a $G$-set $X$ and an $H$-set $Y$ to the $(G\times H)$-set $X\times Y$. 
\end{ejem}

\begin{ejem}
Let $\mathbb{F}$ be a field of characteristic zero and $\zz$ the class of finite groups. As in the previous example,  assigning to each group $G$ in $\zz$ the Grothendieck group of finitely generated $\mathbb{F}G$-modules, $R_{\mathbb{F}}(G)$, defines a biset functor $R_{\mathbb{F}}$. It also has a structure of Green biset functor given in the following way: Let $s\in R_{\mathbb{F}}(G)$ and $t\in R_{\mathbb{F}}(K)$, then $R_{\mathbb{F}}(\textrm{Inf}_G^{\,G\times K})(s)$  and $R_{\mathbb{F}}(\textrm{Inf}_K^{\,G\times K})(t)$ are both in $R_{\mathbb{F}}(G\times K)$ and so the natural product in $R_{\mathbb{F}}(G\times K)$, given by $\otimes_{\mathbb{F}}$, gives us the product $s\times t$.
\end{ejem}

These examples show us the way to pass from many known Green functors to Green biset functors.
 Let $\zz$ be closed under subquotients and direct products. Suppose $A$ is a biset functor defined in $\Omega_{R,\, \zz}$ such that $A(H)$ is an $R$-algebra with unity for each $H\in \zz$, and which satisfies the following: Let $f:K\rightarrow L$ be a group homomorphism for $K,\, L\in \zz$, write $X$ for the natural $(K,\, L)$-biset $L$ and $Y$ for the $(L,\, K)$-biset $L$. Then $A(X)$ is a morphism of unital $R$-algebras and $A$ satisfies the Frobenius reciprocity relations, that is, for all $b\in A(L)$ and $a\in A(K)$
\begin{displaymath}
A(Y)(a)\cdot b=A(a\cdot A(X)(b))\quad \textrm{and}\quad b\cdot A(Y)(a)=A(Y)(A(X)(b)\cdot a).
\end{displaymath}
In this case, $A$ has also a structure of Green biset functor. To define the products $A(G)\times A(H)\rightarrow A(G\times H)$, let $a\in A(G)$ and $b\in A(H)$. We take $A(\textrm{Inf}_G^{\,G\times H})(a)$ and $A(\textrm{Inf}_H^{\,G\times H})(b)$, make their product in $A(G\times H)$ and define $a\times b$ as this product. Then it can be shown that $A$ satisfies the three conditions of Definition \ref{defgreen}, with $\lae_A$ being the identity element of the original product in $A(1)$.

\begin{defi}
 Let $A$ and $C$ be Green biset functors defined in $\Omega_{R,\, \zz}$. A morphism of Green biset functors from $A$ to $C$ is a morphism of biset functors $f:A\rightarrow C$ such that $f_G(a)\times f_H(b)=f_{G\times H}(a\times b)$ for any groups $G,\, H\in \zz$ and any $a\in A(G)$, $b\in A(H)$.
\end{defi}

\begin{ejem}
For a field $\mathbb{F}$ of characteristic $0$, the linearization morphism
\begin{displaymath}
 \textrm{lin}_{\mathbb{F},\, G}:B(G)\rightarrow R_{\mathbb{F}}(G)
\end{displaymath}
sending each $G$-set $X$ to the $\mathbb{F}G$-module $\mathbb{F}X$ is a morphism of biset functors (Remark 1.2.3 in \cite{biset}). By the two previous examples, clearly it is a morphism of Green biset functors.
\end{ejem}

\begin{defi}
\label{defmova}
 Let $\zz$ be a class of groups closed under subquotients and direct products and $A$ be a Green biset functor defined in $\Omega_{R,\, \zz}$. An $A$-module is a biset functor $M$ from  $\Omega_{R,\, \zz}$ to $R$-Mod together with morphism $A(G)\times M(H)\rightarrow M(G\times H)$ for groups $G,\, H\in \zz$, denoted by $(a,\, m)\mapsto a\times m$, fulfilling the following conditions.
\begin{itemize}
 \item Associativity. Let $G$, $H$ and $K$ be groups in $\zz$. If
 $\alpha_{G,\, H,\, K}$ is the canonical group isomorphism from $G\times (H\times K)$ to $(G\times H)\times K$, then for any $a\in A(G)$, $b\in A(H)$ and $m\in M(K)$
\begin{displaymath}
 (a\times b)\times m=M(\textrm{Iso}(\alpha_{G,\, H,\, K}))(a\times (b\times m)).
\end{displaymath}
\item Identity element. Let $G$ be a group in $\zz$. Let $\lambda_G:1\times G\rightarrow G$  denote the canonical group isomorphism. Then for any $m\in M(G)$
\begin{displaymath}
 m=M(\textrm{Iso}(\lambda_G))(\lae_A\times m)
\end{displaymath}
\item Functoriality. If $\varphi: G\rightarrow G'$ and $\psi: H\rightarrow H'$ are morphisms in $\Omega_{R,\, \zz}$, then for any $a\in A(G)$ and $m\in A(H)$
\begin{displaymath}
 M(\varphi \times \psi)(a\times b)=A(\varphi)(a)\times M(\psi)(m).
\end{displaymath}
 \end{itemize}
\end{defi}

As it is seen in Section 8.5 of \cite{biset}, $A$-modules form a category, denoted by $A$-Mod.

\subsection*{The category associated to a Green biset functor}

The following proposition is point 5 in Proposition 8.6.1 of \cite{biset}, where it appears without proof. 

\begin{nota}
If $X$ is a $(G,\, H)$-biset, let $\ovder{X}$ be the $(G\times H,\, 1)$-biset $X$ with action $(g,\, h)x=gxh^{-1}$ for $g$ in $G$, $h$ in $H$ and $x$ in $X$. 
This construction induces a map $\alpha \mapsto \ovder{\alpha}$ from $B(G,\, H)$ to $B(G\times H,\, 1)$. 
For $G$ a group,  $\ovder{G}$ refers to this notation applied to the $(G,\, G)$-biset $G$. In this case, $\oviz{G}$ will denote the $(1,\, G\times G)$-biset $\ovder{G}^{op}$.
\end{nota}

\begin{prop}
\label{catmod}
 Let $A$ be a Green biset functor over $\Omega_{R,\, \zz}$ and let $\cp_A$ be the following category:
\begin{itemize}
 \item The objects of $\cp_A$ are the groups in $\zz$.
 \item If $G$ and $H$ are in $\zz$, then $Hom_{\cp_A}(H,\, G)=A(G\times H)$.
 \item Let $H,\, G$ and $K$ be groups in $\zz$. The composition of $\beta\in A(H\times G)$ and $\alpha\in A(G\times K)$ in $\cp_A$ is the following:
\begin{displaymath}
\beta \circ \alpha = A(H\times \oviz{G}\times K)(\beta\times\alpha).
\end{displaymath}
\item If $G$ is in $\zz$, then the identity morphism of $G$ in $\cp_A$ is $A(\ovder{G})(\varepsilon_A)$.
\end{itemize}
Then $\cp_A$ is an $R$-linear category and $A$-Mod is equivalent to the category of  $R$-linear functors from $\cp_A$ in $R$-Mod.
\end{prop}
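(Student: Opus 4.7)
The plan is to verify, in order: (i) $\cp_A$ is a category, (ii) it is $R$-linear, and (iii) the claimed equivalence between $A$-Mod and the category of $R$-linear functors $\cp_A\to R$-Mod.

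For (i), the main tools are the functoriality and associativity axioms of Definition \ref{defgreen}. To prove associativity of $\circ$, given $\gamma\in A(J\times H)$, $\beta\in A(H\times G)$ and $\alpha\in A(G\times K)$, I would expand both $(\gamma\circ\beta)\circ\alpha$ and $\gamma\circ(\beta\circ\alpha)$ from the definition of $\circ$, use functoriality to pull each $A(\,\cdot\,)$ outside the external products, and invoke the associativity axiom (together with a canonical associator isomorphism) to present both sides as $A(f)(\gamma\times\beta\times\alpha)$ for canonically isomorphic bisets $f$. For the identity, I would compute $A(\ovder{G})(\varepsilon_A)\circ\alpha$ for $\alpha\in A(G\times K)$ by first using functoriality to obtain $A(\ovder{G}\times(G\times K))(\varepsilon_A\times\alpha)$, then the identity axiom to rewrite $\varepsilon_A\times\alpha$ as $A(\textrm{Iso}(\lambda_{G\times K}^{-1}))(\alpha)$, and finally reducing to the assertion that the composite biset $(G\times\oviz{G}\times K)\times_{G\times G\times G\times K}(\ovder{G}\times(G\times K))\times_{G\times K}\textrm{Iso}(\lambda_{G\times K}^{-1})$ is isomorphic to the identity $(G\times K,G\times K)$-biset; the other side is symmetric.

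Part (ii) is immediate: each $A(X)$ is $R$-linear and $\times$ is $R$-bilinear, so $\circ$ is $R$-bilinear on the Hom-modules $A(H\times G)$.

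For (iii), I define $\Phi\colon A$-Mod $\to \textrm{Fun}_R(\cp_A,R$-Mod$)$ by $\Phi(M)(G)=M(G)$ on objects and, for $\alpha\in A(G\times H)=Hom_{\cp_A}(H,G)$, by $\Phi(M)(\alpha)(m)=M(G\times\oviz{H})(\alpha\times m)$ on morphisms; functoriality follows from the same biset manipulations as in (i), with Definition \ref{defmova} replacing Definition \ref{defgreen}. In the other direction, given an $R$-linear functor $F\colon\cp_A\to R$-Mod, I set $\Psi(F)(G)=F(G)$, equip this with the biset-functor structure $\Psi(F)(\varphi):=F(A(\ovder{\varphi})(\varepsilon_A))$ for $\varphi\in RB(H,G)$, and define the external action $a\times m:=F(a\times A(\ovder{H})(\varepsilon_A))(m)$ for $a\in A(G)$, $m\in F(H)$, noting that $a\times A(\ovder{H})(\varepsilon_A)\in A(G\times H\times H)=Hom_{\cp_A}(H,G\times H)$. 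The axioms of Definition \ref{defmova} for $\Psi(F)$, as well as the isomorphisms $\Psi\circ\Phi\cong\textrm{Id}$ and $\Phi\circ\Psi\cong\textrm{Id}$, all reduce once more to biset computations of the same flavour as in (i).

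The main obstacle throughout is bookkeeping: tracking the variance of $\oviz{\cdot}$ and $\ovder{\cdot}$, the order of factors in Hom-sets, and the coherent isomorphisms $1\times X\cong X\cong X\times 1$, so that long expressions $A(f_1)A(f_2)\cdots$ collapse into a single $A$ of the expected biset. No step is deep beyond the axioms of Definitions \ref{defgreen} and \ref{defmova} themselves.
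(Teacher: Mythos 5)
Your proposal is correct and follows essentially the same route as the paper: the two functors you write down ($\Phi$ and $\Psi$) are exactly the maps $S$ and $T$ given there, with your external action $a\times m:=F(a\times A(\ovder{H})(\varepsilon_A))(m)$ agreeing with the paper's $F(A(G\times\ovder{H})(a))(m)$ by functoriality of the product, and both you and the paper defer the remaining verifications to routine biset bookkeeping. If anything, you are slightly more explicit than the paper about what must be checked for $\cp_A$ to be a category in the first place.
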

We will not present the complete proof here, we will only give the maps of the equivalence, leaving to the reader the details of proving it certainly is. 
\begin{proof}
We will write Fun$(\cp_A\rightarrow R\textrm{-Mod})$ for the category of functors from $\cp_A$ to $R$-Mod. 

First we define a functor
\begin{displaymath}
S: A\textrm{-Mod}\longrightarrow \textrm{Fun}(\cp_A\rightarrow R\textrm{-Mod}).
\end{displaymath}
Let $M$ be an $A$-module, define $F_M:\cp_A\rightarrow R$-Mod in a group $G$ as $M(G)$. For a morphism $\alpha\in A(G\times H)$, define
\begin{displaymath}
F_M(\alpha): M(H)\rightarrow M(G)\quad m\mapsto M(G\times\oviz{H})(\alpha\times m).
\end{displaymath}
Once proved that $F_M$ is a functor, it is not hard to see that if
 $f:M\rightarrow N$ is an arrow of $A$-modules, then $f$ defines a morphism from $F_M$ to $F_N$.

Now let $F$ be a functor from $\cp_A$ to $R$-Mod. Then it is a biset functor through the morphism $e: RB\rightarrow A$ of Definition \ref{defgten}. That is, if $\alpha$ is a $(G, H)$-biset, then $F(e_{G\times H}(\alpha))$ is a morphism of $R$-modules from $F(H)$ to $F(G)$.  
Now, in proving the equivalence between Definitions \ref{defgten} and \ref{defgreen}, one observes that  $e_{G\times H}(\alpha)=A(\ovder{\alpha})(\lae_A)$. With this, it is easy to see that $e$ is a morphism of Green biset functors, and then, that $F$ is functorial in $\Omega_{R,\, \zz}$, using the lemma stated right after this proof. 
Also, if $X_H=(H\times H)/\Delta(H)$, we have that $e_{H\times H}(X_H)$ equals $A(\ovder{H})(\lae_A)$, and so $F(X_H)(m)=m$.

To make $F$ an $A$-module, define the product
\begin{displaymath}
A(G)\times F(H)\rightarrow F(G\times H)\quad (a,\, m)\mapsto F(A(G\times \ovder{H})(a))(m).
\end{displaymath}
After proving this product satisfies the three conditions of Definition \ref{defmova}, we have the map in objects of the functor
\begin{displaymath}
T: \textrm{Fun}(\cp_A\rightarrow R\textrm{-Mod})\longrightarrow A\textrm{-Mod}, 
\end{displaymath}
sending $F$ to itself. Now, if $t:F\rightarrow E$ is an arrow in the category of the left hand side, because of the way $F$ was defined in bisets, it is clear that $t$ is also an arrow of biset functors. On the other hand, $t$ commutes with morphisms in $\cp_A$, this implies that $t$ behaves well with the product defined above. 
\end{proof}

The following lemma will be used throughout the rest of the article. We leave the easy proof to the reader.

\begin{lema}
\label{mult}
 Let $A$ and $C$ be Green functors and $f:C\rightarrow A$ a morphism of Green functors. If $\beta\in C(H\times G)$ and $\alpha\in C(G\times K)$, then
\begin{displaymath}
 f_{H\times K}(\beta\circ\alpha)=f_{H\times G}(\beta)\circ f_{G\times K}(\alpha). 
\end{displaymath}
\end{lema}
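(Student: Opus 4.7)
The plan is to unwind the definition of $\circ$ given in Proposition \ref{catmod} on both sides of the claimed equality and then use the two structural properties of $f$, namely that it is a natural transformation of biset functors and that it is compatible with the external products $\times$.

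First I would write out the left-hand side by plugging in the definition of composition in $\cp_C$:
\begin{displaymath}
f_{H\times K}(\beta\circ\alpha)=f_{H\times K}\bigl(C(H\times \oviz{G}\times K)(\beta\times\alpha)\bigr),
\end{displaymath}
noting that $\beta\times\alpha$ lives in $C((H\times G)\times(G\times K))$. Since $f$ is a morphism of biset functors, it is natural with respect to the map $H\times \oviz{G}\times K$ in $\Omega_{R,\,\zz}$, and so the two operators commute, yielding
\begin{displaymath}
f_{H\times K}\bigl(C(H\times \oviz{G}\times K)(\beta\times\alpha)\bigr)=A(H\times \oviz{G}\times K)\bigl(f_{(H\times G)\times(G\times K)}(\beta\times\alpha)\bigr).
\end{displaymath}

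Next, since $f$ is a morphism of Green biset functors, by the defining compatibility $f_{G_1\times G_2}(x\times y)=f_{G_1}(x)\times f_{G_2}(y)$, I can rewrite the inner expression as $f_{H\times G}(\beta)\times f_{G\times K}(\alpha)$. Applying the definition of $\circ$ in $\cp_A$ once more, now in the opposite direction, gives exactly $f_{H\times G}(\beta)\circ f_{G\times K}(\alpha)$, which is the right-hand side.

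There is no real obstacle here: the two invocations (naturality, and compatibility with $\times$) are precisely the two pieces of data that make $f$ a morphism of Green biset functors, and the composition $\circ$ in $\cp_A$ and $\cp_C$ is built from exactly these two operations. The only thing to watch is the bookkeeping of indices: one must recognize that the biset $H\times \oviz{G}\times K$ goes from $(H\times G)\times(G\times K)$ to $H\times K$, so that the naturality square for $f$ is applied at the correct pair of groups.
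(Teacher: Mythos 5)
Your proof is correct and is exactly the intended argument: the paper omits the proof as ``easy,'' and the two ingredients you invoke --- naturality of $f$ with respect to the biset $H\times\oviz{G}\times K$, and compatibility of $f$ with the external product $\times$ --- are precisely what is needed, applied in the right order. The index bookkeeping you flag is handled correctly.
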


As a first application of the previous proposition, we have the following example.

\begin{ejem}
\label{bovbur}
Every biset functor is an $RB$-module. First, observe that the composition in $\cp_{RB}$ coincides with the known composition for bisets. That is, if $\beta$ is an $(H,\, G)$-biset and $\alpha$ is a $(G,\, K)$-biset, there is an isomorphism of $(H,\, K)$-bisets between
\begin{displaymath}
 RB(H\times \oviz{G}\times K)(\beta\times\alpha)\quad \textrm{and}\quad \ \beta\times_G \alpha.
\end{displaymath}
The Bouc decomposition for $\oviz{G}$ is
\begin{displaymath}
\textrm{Def}_{1}^{\Delta (G)}\times_{\Delta (G)} \textrm{Res}_{\Delta (G)}^{G\times G},
\end{displaymath}
so $RB(H\times \oviz{G}\times K)(\beta\times\alpha)$ is the deflation from $H\times \Delta (G)\times K$ to $H\times K$ of $\beta\times \alpha$ having the following action as $H\times \Delta (G)\times K$-set 
\begin{displaymath}
(h,\, g,\, g,\, k)\cdot (x,\, y)=(hxg^{-1},\, gyk^{-1}).  
\end{displaymath}
This deflation is then the biggest quotient of $\beta\times\alpha$ in which $1\times \Delta (G)\times 1$ acts in the trivial way, and this is precisely $\beta\times_G \alpha$. 

So, the category $\cp_{RB}$ is equivalent to $\Omega_{R,\, \zz}$, and any biset functor is an $RB$-module.
\end{ejem}

A deep study of simple biset functors can be found in Chapter 4 of \cite{biset}. There we can find that the isomorphism classes of simple $RB$-modules are indexed by isomorphism classes of seeds $(H,\, V)$, where $H$ is a group in $\zz$ and $V$ is a simple $ROut(H)$-module. To each seed $(H,\, V)$ corresponds the isomorphism class of $S_{H,\, V}$, where $S_{H,\, V}(G)$ is the quotient of 
$L_{H,\, V}(G)=RB(G, H)\otimes_{RB(H,\, H)}V$ over
\begin{displaymath}
 J_{H,\, V}(G)=\Big\{\sum_{i=1}^n\varphi_i\otimes n_i\in L_{H,\, V}(G) \mid 
 \sum_{i=1}^n(\psi\circ \varphi_i)\cdot n_i=0\ \forall \psi \in RB(H, G) \Big\}.
\end{displaymath}

An important characteristic of $H$ is that it is a minimal group for $S_{H,\, V}$, and for a simple biset functor all its minimal groups are isomorphic.

\begin{defi}
 \label{defmin}
Let $M$ be a biset functor defined on $\Omega_{R,\, \zz}$. A group $H$ in $\zz$ is called minimal for $M$ if $M(H)\neq 0$ and $M(K)=0$ for any $K$ in $\zz$ with $|K|<|H|$.
\end{defi}

This motivates the following definition.

\begin{defi}
 \label{defalgq}
Let $A$ be a Green biset functor on $\Omega_{R,\, \zz}$, and $H$ be a group in $\zz$. We will write $I_A(H)$ for the submodule of $A(H\times H)$ generated by elements of the form $a\circ b$, where $a$ is in $A(H\times K)$, $b$ is in $A(K\times H)$ and $K$ is a group in $\zz$ of order smaller than $|H|$. We will denote by $\hat{A}(H)$ the quotient $A(H\times H)/I_A(H)$.
\end{defi}

It is clear that $\hat{A}(H)$ is an $R$-algebra.  If $A=RB$, it is known (Proposition 4.3.2 in \cite{biset}, for instance) that $\hat{RB}(H)$ is isomorphic to $ROut(H)$. Nonetheless, we will see in the following section that this quotient $\hat{RB}(H)$ may vanish.

The classification of simple biset functors is a particular case of the following conjecture.

\begin{conj}[S. Bouc, personal communication]
\label{laconj}
Suppose that $A$ is a Green biset functor. Let $\mathcal{S}_A$ be the set of equivalence classes of couples $(H,\, V)$, where $\hat{A}(H)\neq 0$, $V$ is a simple $\hat{A}(H)$-module and $(H,\, V)\sim (G,\, W)$ if there exists an isomorphism of groups $\varphi:H\rightarrow G$ such that $V\cong {}^{\varphi}W$. 

Then the isomorphism classes of simple $A$-modules are in one-to-one correspondence with the elements of $\mathcal{S}_A$.
\end{conj}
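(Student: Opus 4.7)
The approach is to adapt, inside the category $\cp_A$ of Proposition~\ref{catmod}, the construction of $L_{H,V}$ and $S_{H,V}$ recalled above for the case $A=RB$. Given a pair $(H,V)$ with $\hat{A}(H)\neq 0$ and $V$ a simple $\hat{A}(H)$-module, I would view $V$ as an $A(H\times H)$-module via the projection $A(H\times H)\to\hat{A}(H)$, and set
\begin{displaymath}
L_{H,V}(G)=A(G\times H)\otimes_{A(H\times H)}V,
\end{displaymath}
with $\gamma\in A(K\times G)=Hom_{\cp_A}(G,K)$ acting by $\alpha\otimes v\mapsto(\gamma\circ\alpha)\otimes v$. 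This produces an $R$-linear functor $\cp_A\to R$-Mod, hence an $A$-module by Proposition~\ref{catmod}, and the canonical isomorphism $A(H\times H)\otimes_{A(H\times H)}V\cong V$ identifies $L_{H,V}(H)$ with $V$.

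The central structural claim is that $L_{H,V}$ has a unique maximal proper submodule $J_{H,V}$, namely the sum of all submodules vanishing at $H$. Two observations are needed. First, $L_{H,V}$ is generated by $V\cong L_{H,V}(H)$ as an $A$-module, since any simple tensor $\alpha\otimes v$ is the image of $v$ under the action of $\alpha$; consequently, a submodule $T$ with $T(H)=V$ must equal $L_{H,V}$. Second, for any submodule $T$, the subspace $T(H)\subseteq V$ is $A(H\times H)$-stable, and since $I_A(H)$ annihilates $V$ by construction, $T(H)$ is in fact an $\hat{A}(H)$-submodule; simplicity of $V$ then forces $T(H)\in\{0,V\}$. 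Combining these yields the maximality, and $S_{H,V}:=L_{H,V}/J_{H,V}$ is simple with $S_{H,V}(H)=V$. Minimality of $H$ for $S_{H,V}$ follows by the same mechanism: if $|K|<|H|$ and $\alpha\otimes v\in L_{H,V}(K)$, then for each $\psi\in A(H\times K)$ the element $\psi\circ\alpha\in A(H\times H)$ lies in $I_A(H)$ and annihilates $v$, so the submodule generated by $\alpha\otimes v$ vanishes at $H$, whence $\alpha\otimes v\in J_{H,V}(K)$.

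For the converse, let $S$ be a simple $A$-module and let $H$ be a minimal group for $S$, unique up to isomorphism by the working hypothesis; set $V=S(H)$. Minimality again shows that $A(H\times H)$ acts on $V$ through $\hat{A}(H)$, and simplicity of $V$ as an $\hat{A}(H)$-module is obtained by taking a nonzero $\hat{A}(H)$-submodule $W\subseteq V$, observing that the $A$-submodule of $S$ generated by $W$ is nonzero and therefore equal to $S$, while its value at $H$ is $A(H\times H)\cdot W=\hat{A}(H)\cdot W=W$, forcing $W=V$. The canonical surjection $L_{H,V}\to S$, $\alpha\otimes v\mapsto\alpha\cdot v$, has a proper kernel that must coincide with $J_{H,V}$ by maximality, so $S\cong S_{H,V}$. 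Well-definedness and injectivity of the correspondence on equivalence classes reduce to transport of structure along an isomorphism $\varphi:H\to H'$ between minimal groups. The main obstacle, in my view, is precisely this last point: the uniqueness hypothesis on minimal groups is exactly what is needed to extract a canonical pair $(H,V)$ from a simple $A$-module, and without it the assignment $S\mapsto(H,S(H))$ fails to give a well-defined injection into $\mathcal{S}_A$.
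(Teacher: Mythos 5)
Your argument is correct and follows essentially the same route as the paper's Proposition \ref{isosim}: the same $L^A_{H,V}$ construction, the same unique-maximal-submodule argument (which the paper outsources to Lemma 1 of Bouc's \emph{ensemble} paper and to the adjunction between $V\mapsto L^A_{H,V}$ and evaluation at $H$), and the same use of the uniqueness-of-minimal-groups hypothesis, which you rightly identify as indispensable for well-definedness of $S\mapsto(H,S(H))$. The only step you leave as ``transport of structure'' --- that $(H,V)\sim(G,W)$ implies $S^A_{H,V}\cong S^A_{G,W}$ --- is carried out explicitly in the paper via the map $\mu_K(a,v)=a\circ\alpha_1\otimes v$ built from the mutually inverse morphisms $\alpha_1,\alpha_2$ of Notation \ref{notaiso}, but this is a routine verification and does not affect correctness.
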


The details of the isomorphism $V\cong {}^{\varphi}W$ are given in Section 4. 

In the following two sections we will present other three examples of Green biset functors which also satisfy this conjecture.

If $S$ is a simple $A$-module, then $S\neq 0$, so taking $H$ as a group of minimal order such that $S(H)\neq 0$, we have a minimal group for $S$.

\begin{lema}
\label{minis}
Let $S$ be a simple $A$-module for a Green biset functor $A$ defined in $\Omega_{R,\,\zz}$. If $H\in \zz$ is a minimal group for $S$, then $\hat{A}(H)\neq 0$ and $S(H)$ is a simple $\hat{A}(H)$-module.
\end{lema}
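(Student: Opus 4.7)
The plan is to translate the problem using the equivalence of categories from Proposition \ref{catmod}, so that $S$ corresponds to a simple $R$-linear functor $F_S:\cp_A\to R\textrm{-Mod}$. Since $\textrm{End}_{\cp_A}(H)=A(H\times H)$, the $R$-module $S(H)$ is a unital left $A(H\times H)$-module, with $\alpha\in A(H\times H)$ acting on $m\in S(H)$ by $\alpha\cdot m := F_S(\alpha)(m) = S(H\times \oviz{H})(\alpha\times m)$.

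The first substantive step is to check that $I_A(H)$ annihilates $S(H)$. By $R$-linearity it suffices to treat a generator $a\circ b$ with $b\in A(K\times H)$, $a\in A(H\times K)$ and $|K|<|H|$. Since $F_S$ is a functor on $\cp_A$, the action of $a\circ b$ on $m\in S(H)$ factors as
\[
(a\circ b)\cdot m = F_S(a)\bigl(F_S(b)(m)\bigr),
\]
and by minimality of $H$ we have $S(K)=0$, so $F_S(b)(m)=0$. Hence the $A(H\times H)$-module structure on $S(H)$ descends to a $\hat{A}(H)$-module structure.

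Simplicity and non-vanishing can then be handled simultaneously. Pick any nonzero $m\in S(H)$ and consider the $A$-submodule $\langle m\rangle$ of $S$ generated by $m$, equivalently the subfunctor of $F_S$ generated by $m$. Since $S$ is simple, $\langle m\rangle=S$. On the other hand, evaluated at $H$, the subfunctor generated by $m$ is the $R$-span of the elements $F_S(\alpha)(m)=\alpha\cdot m$ for $\alpha\in A(H\times H)$, which coincides with $A(H\times H)\cdot m$. Passing to the quotient by $I_A(H)$ gives $S(H)=\hat{A}(H)\cdot m$. This shows at once that every nonzero element of $S(H)$ generates $S(H)$ over $\hat{A}(H)$ (so $S(H)$ is simple), and that $\hat{A}(H)\neq 0$, since otherwise $S(H)=\hat{A}(H)\cdot m=0$ would contradict $m\neq 0$.

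The only point that requires care is the identification of $\langle m\rangle(H)$ with $A(H\times H)\cdot m$; this is a general property of subfunctors in $R$-linear functor categories and becomes routine once Proposition \ref{catmod} is applied, so I do not anticipate any real obstacle.
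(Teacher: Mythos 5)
Your argument is correct, and its skeleton is the same as the paper's: minimality of $H$ forces $I_A(H)$ to annihilate $S(H)$ (via $F_S(a\circ b)=F_S(a)F_S(b)$ factoring through $S(K)=0$), and simplicity of $S(H)$ is deduced from simplicity of $S$ by passing between submodules of $S(H)$ and subfunctors of $S$. The one genuine difference is which subfunctor you build. You fix a nonzero $m\in S(H)$ and take the subfunctor \emph{generated} by $m$, whose value at $H$ is exactly $A(H\times H)\cdot m$; simplicity of $S$ then gives the cyclicity statement $S(H)=A(H\times H)\cdot m=\hat{A}(H)\cdot m$, which delivers both conclusions at once ($\hat{A}(H)\neq 0$ because the action is unital and $S(H)\neq 0$). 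The paper argues dually: for a submodule $W\leqslant S(H)$ it forms the \emph{largest} subfunctor $T$ of $S$ with value at $H$ inside $W$, namely $T(G)=\{m\in S(G)\mid S(X)(m)\in W\ \forall X\in A(H\times G)\}$, checks $T(H)=W$, and concludes $W\in\{0,\,S(H)\}$; the non-vanishing of $\hat{A}(H)$ is handled separately by noting that otherwise the identity $A(\ovder{H})(\lae_A)$ would lie in $I_A(H)$ and hence act as zero on $S(H)\neq 0$. The two constructions are equally routine; yours is slightly more economical since one construction covers both claims, while the paper's $T$ is the construction that reappears in the maximal subfunctor $J^A_{H,V}$ and so foreshadows the machinery of Proposition \ref{isosim}. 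Your only deferred point, the identification of the generated subfunctor at $H$ with $A(H\times H)\cdot m$, is indeed immediate from $R$-linearity of $F_S$ and $F_S(\psi)F_S(\varphi)=F_S(\psi\circ\varphi)$, so there is no gap.
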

\begin{proof}
 Suppose we have $\hat{A}(H)=0$, then in particular $A(\ovder{H})(\lae_A)=\sum_{i=1}^r\alpha_i\circ\beta_i$ where $\alpha_i\in A(H\times K_i)$ and $\beta_i\in A(K_i\times H)$ with $K_i$ a group of order smaller than $|H|$. But $S(\alpha_i\circ\beta_i)=0$ because $H$ is minimal for $S$, then $S(A(\ovder{H})(\lae_A))=0$, a contradiction. 

Clearly, $S(H)$ is an $\hat{A}(H)$-module, so it suffices to see it is a simple $A(H\times H)$-module. Let $W$ be an $A(H\times H)$-submodule, define 
\begin{displaymath}
 T(G)=\{m\in S(G)\mid \forall X\in A(H\times G),\ S(X)(m)\in W\}.
\end{displaymath}
It is easy to see that $T$ is a subfunctor of $S$. Besides, $T(H)=W$. Now, if $t\in T(H)$, then in particular for $X=A(\ovder{H})(\lae_A)$ we must have $T(X)(t)\in W$, then $T(H)\subseteq W$. On the other hand, since $W$ is a $A(H\times H)$-submodule, then $S(X)(w)$ is in $W$ for all $w\in W$ and $X\in A(H\times H)$, and so $W\subseteq T(H)$.

Hence, if $T=0$ we have $W=0$, and if $T=S$ then $W=S(H)$.
\end{proof}

\section{$k R_{\rac}$-modules or rhetorical biset functors}

In this section $k$ will be a field of characteristic $0$. 

Rhetorical biset functors were introduced by Laurence Barker in \cite{barker}, there he proves that every rhetorical biset functor over $k$ is semisimple and gives a classification of the simple functors. 
The definition of rhetorical biset functor we will give next is not exactly the one appearing in \cite{barker} but, under the hypothesis we will make, it is equivalent to that one. 

Denote by $\hat{\Omega}_k$ the category having the class of finite groups as objects, and where the morphisms from $G$ to $H$ is the  quotient $k B(H\times G) / \textrm{}Ker(k\textrm{lin}_{\mathbb{F}\,H\times G})$, with $\mathbb{F}$ a field of characteristic 0 and 
\begin{displaymath}
 k\lin_{\mathbb{F}}:kB\rightarrow kR_{\mathbb{F}}
\end{displaymath}
being the linearization morphism. Thanks to Lemma \ref{mult}, the composition $\circ$ can be extended to this quotient. A rhetorical biset  functor is defined as a  $k$-linear functor from  $\hat{\Omega}_{k}$ to $k$-Mod.

Proposition \ref{catmod} and Artin's induction theorem gives us that in the case $\mathbb{F}=\rac$, rhetorical biset functors coincide with $kR_{\rac}$-modules. An equivalent statement is made in Proposition 3.1 of $\cite{barker}$, in view of the following lemma, which describes the composition $\circ$ in the category $\mathcal{P}_{kR_{\rac}}$.

\begin{lema}
 \label{prodmod}
Let $H,\, G$ and $K$ be groups. If $[M]$ is in $kR_{\rac}(H\times G)$ and $[N]$ is in $kR_{\rac}(G\times K)$, then
\begin{displaymath}
kR_{\rac}(H\times\oviz{G}\times K)([M]\times [N])=[M\otimes_{\rac G} N]. 
\end{displaymath} 
\end{lema}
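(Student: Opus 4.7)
The plan is to unpack both sides explicitly via the Bouc decomposition of $\oviz{G}$. First, from the explicit description of the Green biset functor structure on $R_{\rac}$ given in the example before, the external product $[M]\times[N]$ in $kR_{\rac}(H\times G\times G\times K)$ is the class of the external tensor product $M\otimes_{\rac}N$, regarded as a rational representation of $H\times G\times G\times K$ in which $(h,g_1,g_2,k)$ acts via $(h,g_1)$ on the $M$-factor and via $(g_2,k)$ on the $N$-factor. So I only have to compute what $kR_{\rac}$ does to the biset $H\times\oviz{G}\times K$ applied to this module.

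Next, I would use the Bouc decomposition of $\oviz{G}$ recalled in Example~\ref{bovbur}, namely $\oviz{G}=\textrm{Def}_{1}^{\Delta(G)}\times_{\Delta(G)}\textrm{Res}_{\Delta(G)}^{G\times G}$. Since the $H$ and $K$ factors of $H\times\oviz{G}\times K$ contribute identity morphisms on their respective coordinates, applying $kR_{\rac}$ amounts to first restricting the $G\times G$-action along the diagonal to get a representation of $H\times\Delta(G)\times K$, and then deflating by $\Delta(G)$ to land in $kR_{\rac}(H\times K)$. Carrying this through, the restriction gives the same underlying vector space $M\otimes_{\rac}N$ with diagonal action $g\cdot(m\otimes n)=((1,g)m)\otimes((g,1)n)$, and the deflation replaces it by its $\Delta(G)$-coinvariants.

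The proof then finishes by identifying these coinvariants with $M\otimes_{\rac G}N$. Converting the left $\rac(H\times G)$-module $M$ into an $(\rac H,\rac G)$-bimodule (and similarly for $N$), the relation imposed by passing to $\Delta(G)$-coinvariants is exactly $mg\otimes n=m\otimes gn$ for $g\in G$, which is the defining relation of the bimodule tensor product $M\otimes_{\rac G}N$. The residual $(H,K)$-bimodule action coming from the outer factors matches the natural $\rac(H\times K)$-module structure on $M\otimes_{\rac G}N$, so the two classes in $kR_{\rac}(H\times K)$ coincide.

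The only real obstacle is bookkeeping: fixing once and for all the convention that turns a left $\rac(H\times G)$-module into a $(\rac H,\rac G)$-bimodule (this requires inverting the $G$-coordinate), and then verifying that under that convention the diagonal $\Delta(G)$-action from the restriction step produces precisely the $\rac G$-balanced relation rather than a twisted version of it. Once the conventions are pinned down, everything reduces to the standard realization of a bimodule tensor product as coinvariants of an external tensor product, so no deeper input is needed.
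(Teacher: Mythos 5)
Your proposal is correct and follows essentially the same route as the paper: expressing $[M]\times[N]$ as the external tensor product, applying the Bouc decomposition $\oviz{G}=\textrm{Def}_{1}^{\Delta(G)}\times_{\Delta(G)}\textrm{Res}_{\Delta(G)}^{G\times G}$ to reduce to a restriction to $H\times\Delta(G)\times K$ followed by a deflation, and identifying that deflation (the largest quotient on which $\Delta(G)$ acts trivially) with $M\otimes_{\rac G}N$. The extra care you take with the bimodule conventions is harmless but not needed beyond what the paper records.
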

\begin{proof}
 Recall that the product $[M]\times [N]$ is defined as the isomorphism class 
of the $\rac (H\times G\times G\times K)$-module $M\otimes_{\rac} N$, where  $G\times K$ acts in the trivial way in $M$ and  $H\times G$ acts in the trivial way in $N$. 
From the Bouc decomposition for $\oviz{G}$, we have that applying $A(H\times\oviz{G}\times K)$ to this module  gives the deflation from 
$H\times \Delta(G)\times K$ to $H\times K$ of the module $M\otimes_{\rac} N$ restricted to $H\times \Delta(G)\times K$.
But this deflation is the biggest quotient of $M\otimes_{\rac} N$ in which $1\times \Delta(G)\times 1$ acts in the trivial way, that is $M\otimes_{\rac G} N$.
\end{proof}

Hence, Theorem 1.5 of \cite{barker} gives a classification of simple $kR_{\rac}$-modules: They are the simple biset functors indexed by seeds $(H,\, V)$ where $H$ is a cyclic group of order $m$ and $V$ is a primitive $k(\zze/m\zze)^{\times}$-module. 

\begin{defi}
 Let $m$ be in $\mathbb N-\{0\}$. A simple $k (\zze/m\zze)^{\times}$-module $V$ is called  primitive if given $n$ a divisor of $m$ and $\pi_{m,\, n}:(\zze/m\zze)^{\times}\rightarrow (\zze/n\zze)^{\times}$ the natural projection, if $Ker\pi_{m,\, n}$ acts trivially on $V$, then $n=m$.
\end{defi}

The rest of this section is devoted to give a different proof of this classification, using the techniques of Green biset functors. First, we observe that this classification is a particular case of  Conjecture \ref{laconj}. This fact can also be concluded from Proposition \ref{isosim}, by proving (as we will do  with $\com R_{\com}$ and $RB_{C}$) that for each simple $kR_{\rac}$-module its minimal groups are unique up to isomorphism. On the other hand, using the fact that $k\lin_{\rac}$ is an epimorphism of Green biset functors, we can prove directly not only that $kR_{\rac}$ satisfies the parametrization of Conjecture \ref{laconj}, but also that simple $kR_{\rac}$-modules are simple biset functors. 

In proving the following lemma, we will use the fact, easy to proof, that it $f:A\rightarrow C$ is an  epimorphism of Green biset functors, then the simple $C$-modules are the simple $A$-modules in which $Kerf$ acts in the trivial way. 

\begin{lema}
 The simple $kR_{\rac}$-modules are the simple biset functors $S_{H, V}$, where $H$ satisfies $\hat{kR}_{\rac}(H)\neq 0$ and $V$ is a simple $\hat{kR}_{\rac}(H)$-module.
\end{lema}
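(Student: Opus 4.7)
The plan is to exploit the fact, stated just before the lemma, that $k\lin_{\rac}\colon kB\to kR_{\rac}$ is an epimorphism of Green biset functors (surjectivity at each group follows from Artin's induction theorem, since $k$ has characteristic zero). By the general principle quoted in the same paragraph, the simple $kR_{\rac}$-modules are exactly the simple $kB$-modules on which $Ker(k\lin_{\rac})$ acts trivially. Since every biset functor is a $kB$-module (Example \ref{bovbur}), simple $kB$-modules are precisely the simple biset functors $S_{H,V}$ classified just after that example, indexed by seeds $(H,V)$ with $V$ a simple $\hat{kB}(H)=kOut(H)$-module. The remaining task is to characterize when $Ker(k\lin_{\rac})$ acts trivially on $S_{H,V}$, and the target characterization is: $\hat{kR}_{\rac}(H)\neq 0$ and $V$ is a simple $\hat{kR}_{\rac}(H)$-module.

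A first observation, obtained by applying Lemma \ref{mult} to the morphism of Green biset functors $k\lin_{\rac}$, is that $k\lin_{\rac}$ sends $I_{kB}(H)$ into $I_{kR_{\rac}}(H)$; together with the surjectivity, this yields a surjection of $k$-algebras $\hat{kB}(H)\twoheadrightarrow \hat{kR}_{\rac}(H)$. For the forward direction, assume $Ker(k\lin_{\rac})$ acts trivially on $S_{H,V}$. Evaluating at $H$, the action of $kB(H\times H)$ on $V=S_{H,V}(H)$, which already factors through $\hat{kB}(H)$, further factors through $\hat{kR}_{\rac}(H)$, so $V$ is a simple $\hat{kR}_{\rac}(H)$-module and in particular $\hat{kR}_{\rac}(H)\neq 0$. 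For the reverse direction, suppose $V$ is a simple $\hat{kR}_{\rac}(H)$-module, pick $\alpha\in Ker(k\lin_{\rac,G'\times G})$ and a generator $\varphi\otimes v\in L_{H,V}(G)$; the $kB$-action gives $\alpha\cdot(\varphi\otimes v)=(\alpha\circ\varphi)\otimes v\in L_{H,V}(G')$. To verify it lies in $J_{H,V}(G')$, take an arbitrary $\psi\in kB(H\times G')$ and apply Lemma \ref{mult}:
\begin{displaymath}
k\lin_{\rac}(\psi\circ\alpha\circ\varphi)=k\lin_{\rac}(\psi)\circ k\lin_{\rac}(\alpha)\circ k\lin_{\rac}(\varphi)=0,
\end{displaymath}
so $\psi\circ\alpha\circ\varphi$ lies in the kernel of $kB(H\times H)\twoheadrightarrow kR_{\rac}(H\times H)\twoheadrightarrow \hat{kR}_{\rac}(H)$ and therefore acts as zero on $v$. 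Hence $(\alpha\circ\varphi)\otimes v\in J_{H,V}(G')$, and $Ker(k\lin_{\rac})$ acts trivially on $S_{H,V}$.

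The main obstacle is the careful bookkeeping relating the two quotient algebras $\hat{kB}(H)=kOut(H)$ and $\hat{kR}_{\rac}(H)$ and matching module structures through them; once the induced surjection $\hat{kB}(H)\twoheadrightarrow \hat{kR}_{\rac}(H)$ is in place, the remaining work is a formal manipulation using Lemma \ref{mult} to propagate $k\lin_{\rac}(\alpha)=0$ through the compositions that define the ideal $J_{H,V}(G')$.
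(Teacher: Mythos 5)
Your proof is correct and follows essentially the same route as the paper: both directions rest on the epimorphism principle for $k\lin_{\rac}$, with the forward direction obtained by factoring the $kB(H\times H)$-action on $V=S_{H,V}(H)$ through $\hat{kR}_{\rac}(H)$ (the paper invokes Lemma \ref{minis} for this, which amounts to the same thing), and the reverse direction obtained by pushing $k\lin_{\rac}(\alpha)=0$ through compositions via Lemma \ref{mult} to land in $J_{H,V}$. The only cosmetic difference is that you phrase the kernel's action through morphisms $\alpha\in kB(G'\times G)$ in $\cp_{kB}$ while the paper starts from the bilinear product $a\times m$ and reduces to that formulation via Proposition \ref{catmod}; these are equivalent since $Ker(k\lin_{\rac})$ is a subfunctor of $kB$.
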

\begin{proof}
 Since the linearization morphism $k$lin$_{\rac}$ is an epimorphism of Green biset functors, the simple $kR_{\rac}$-modules are the simple biset functors $S_{H,\, V}$ in which $Ker( k$lin$_{\rac})$ acts trivially.  Hence, by Lemma \ref{minis}, we have that $\hat{kR}_{\rac}(H)\neq 0$ and $V$ is a simple $\hat{kR}_{\rac}(H)$-module.

Now let $S_{H,\, V}$ be a simple biset functor  such that $\hat{kR}_{\rac}(H)\neq 0$ and $V$ is a simple $\hat{kR}_{\rac}(H)$-module. 

Take $K$ and $G$ arbitrary groups, 
as we said in the previous section, $S_{H,\, V}(G)$ is the quotient of $L_{H,\, V}(G)=kB(G,\, H)\otimes_{kB(H,\, H)}V$ over its unique maximal subfunctor 
\begin{displaymath}
 J_{H,\, V}(G)=\Big\{\sum_{i=1}^n\varphi_i\otimes n_i\in L_{H,\, V}(G) \mid 
 \sum_{i=1}^n(\psi\circ \varphi_i)\cdot n_i=0\ \forall \psi \in kB(H,\, G) \Big\}.
\end{displaymath}
Let $a$ be in $Ker (k$lin$_{\rac\, K})$, we shall prove that $a$ acts in the trivial way on $S_{H,\, V}(G)$. It suffices then, to prove that if $m=\sum_{i=1}^n$\mbox{$\varphi_i\otimes n_i$} is an element in $L_{H,\, V}(G)$, the product $a\times m$ belongs to $J_{H,\, V}(K\times G)$. By Proposition \ref{catmod}, the product $a\times m$ is given by \mbox{$S_{H,\, V}(kB(K\times \ovder{G})a)(m)$},  
but $Ker (k$lin$_{\rac})$ is a subfunctor of $kB$, so $kB(K\times \ovder{G})a$ is in $Ker(k$lin$_{\rac\,K\times G\times G})$. 
Then, if $\psi$ is any element in $kB(H\times K\times G)$,  by Lemma \ref{mult},  the composition 
\begin{displaymath}
 x_i=\psi\circ (A(K\times \ovder{G})a)\circ \varphi_i
\end{displaymath}
belongs to $Ker (k$lin$_{\rac\,H\times H})$. Finally, since $V$ is a $kB(H\times H)$-module that is also a \mbox{$kR_{\rac}(H\times H)$}-module,  we must have $x_i\cdot n_i=0$ for all $i$, and hence $a\times m$ is in $J_{H,\, V}(K\times G)$.

This proves that $Ker (k$lin$_{\rac})$ acts in the trivial way on $S_{H,\, V}$ and so, this is a $kR_{\rac}$-module.
\end{proof}

The following result tells us that in order to find the simple biset functors of this lemma, it suffices to consider cyclic groups.

\begin{lema}
 \label{coci} 
$ $
\begin{itemize} 
 \item[i)] Let $G$ be any non trivial group. Then $I_{k R_{\rac}}(G)$ 
is equal to
\begin{displaymath}
 \sum_{\substack{K\ \textrm{cyclic}\\ \textrm{$|K|$ proper divisor of $|G|$}}}k R_{\rac}(G\times K)\circ k R_{\rac}(K\times G). 
\end{displaymath}
\item[ii)] If $G$ is not a cyclic group, then $\hat{kR}_{\rac}(G)=0$.  
\end{itemize}
\end{lema}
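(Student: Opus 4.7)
The plan is to combine Artin's induction theorem (every virtual rational representation of a finite group is a $\rac$-linear combination of permutation modules $\rac[G/C]$ with $C$ cyclic) with the Bouc decomposition quoted at the beginning of the section, and to transfer the latter from $\cp_{kB}$ to $\cp_{kR_\rac}$ via the linearization morphism $k\lin_\rac:kB\to kR_\rac$, which is a morphism of Green biset functors.

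For i), the containment $\supseteq$ is immediate. For the reverse, I start with a generator $a\circ b$ of $I_{kR_\rac}(G)$ with $|K|<|G|$ and first apply Artin to $a\in kR_\rac(G\times K)$, obtaining $a=\sum_{L}a_L\bind_L^{G\times K}(\rac)$ where $L$ ranges over cyclic subgroups of $G\times K$. Because each permutation module $\rac[(G\times K)/L]$ is the image under $k\lin_\rac$ of the biset $(G\times K)/L$, Lemma \ref{mult} lets me push the Bouc decomposition of that biset to a factorization $\bind_L^{G\times K}(\rac)=\alpha_L\circ\beta_L$ in $\cp_{kR_\rac}$ through the cyclic quotient $X_L=D_L/C_L$ (cyclic because $L$ is). Composing with $b$ gives $a\circ b=\alpha_L\circ(\beta_L\circ b)\in kR_\rac(G\times X_L)\circ kR_\rac(X_L\times G)$, which is the required form.

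The heart of the argument is the arithmetic bound on $|X_L|$. Writing $L=\langle (g_1,g_2)\rangle$ and using Goursat's lemma, a direct computation gives $|X_L|=\gcd(|g_1|,|g_2|)=\gcd(|D_L|,|B_L|)$, where $D_L\leq G$ and $B_L\leq K$. Since $|D_L|$ divides $|G|$ and $|B_L|$ divides $|K|<|G|$, I conclude simultaneously that $|X_L|$ divides $|G|$ and that $|X_L|\leq |B_L|\leq |K|<|G|$, so $|X_L|$ is a \emph{proper} divisor of $|G|$. This closes i).

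For ii), it suffices to exhibit $\textrm{id}_G\in I_{kR_\rac}(G)$. By Proposition \ref{catmod} and a short computation, $\textrm{id}_G$ is the class of $\bind_{\Delta G}^{G\times G}(\rac)$ (i.e.\ $\rac G$ as a $(G,G)$-bimodule). Applying Artin to $\Delta G\cong G$ and inducing to $G\times G$ yields $\textrm{id}_G=\sum_{C\leq G\ \textrm{cyclic}}a_C[\rac G\otimes_{\rac C}\rac G]$, each summand factoring through $C$ in $\cp_{kR_\rac}$. When $G$ is non-cyclic, no element of $G$ has order $|G|$, so every cyclic $C\leq G$ has $|C|<|G|$ while $|C|$ still divides $|G|$, and part i) finishes the argument. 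The main obstacle I foresee is keeping the Bouc factorizations honest when transported to $\cp_{kR_\rac}$ and, more delicately, isolating the arithmetic that upgrades "$|X_L|<|G|$" to the stronger "$|X_L|$ a proper divisor of $|G|$"; the $\gcd(|D_L|,|B_L|)$ formula, available precisely because $L$ is cyclic, is what makes both constraints land on the same $X_L$.
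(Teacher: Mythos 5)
Your proof is correct and follows essentially the same route as the paper: Artin induction combined with the Bouc decomposition, transported to $\cp_{kR_{\rac}}$ via the linearization morphism and Lemma \ref{mult}, with the key arithmetic point that the middle group $D/C\cong B/A$ is a cyclic subquotient of both $G$ and $K$, hence of order a proper divisor of $|G|$ (your $\gcd$ computation just makes this explicit). Your part ii) is a mild streamlining --- applying Artin inside $\Delta G$ and using that $I_{kR_{\rac}}(G)$ is an ideal under $\circ$ to reduce to $\mathrm{id}_G$ --- where the paper instead applies Artin to an arbitrary element of $kR_{\rac}(G\times G)$; both work.
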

\begin{proof}
Let $L$ be any group. By the Artin Induction Theorem, every element $x$ in $kR_{\rac}(G\times L)$ can be written as  
\begin{displaymath}
 \sum_{\substack{C\leqslant G\times L\\ C\ \textrm{cyclic}}} a_C [1_C\ind{C}{G\times L}]
\end{displaymath}
for some $a_C\in k$. Now, in Bouc's decomposition for $(G\times L)/C$
\begin{displaymath}
 \textrm{Ind}_D^G\times_D \textrm{Inf}_{D/E}^D\times_{D/E} \textrm{Iso}_{B/A}^{D/E}\times_{B/A} \textrm{Def}_{B/A}^B\times_B \textrm{Res}_B^L
\end{displaymath}
we have that $D/E$ is a cyclic subquotient of $G$. On the other hand, since the linearization morphism is a  morphism of Green biset functors, from Lemma \ref{mult} we have
\begin{displaymath}
[1_C\ind{C}{G\times L}]= [1_X\ind{X}{G\times D_C}]\circ[1_Y\ind{Y}{D_C\times L}] 
\end{displaymath}
if we write $D_C$ for $D/E$, and suppose that $\textrm{Ind}_D^G\times_D \textrm{Inf}_{D/E}^D$ is isomorphic to $(G\times D_C)/X$ and $\textrm{Iso}_{B/A}^{D/E}\times_{B/A} \textrm{Def}_{B/A}^B\times_B \textrm{Res}_B^L$ is isomorphic to $(D_C\times L)/Y$.

To prove i) suppose that $L$ is a group of order smaller than $|G|$ and take $y$ in $k R_{\rac}(L\times G)$. If in the previous decomposition we write $y_C=[1_Y\ind{Y}{D_C\times L}]\circ y$, then $x\circ y$ is equal to
\begin{displaymath}
 \sum_{\substack{C\leqslant G\times L\\ C\ \textrm{cyclic}}} a_C [1_X\ind{X}{G\times D_C}]\circ y_C,
\end{displaymath}
which belongs to $\sum_{K}k R_{\rac}(G\times K)\circ k R_{\rac}(K\times G)$ with $K$ cyclic of order a proper divisor of $|G|$.

To prove ii) suppose that $G$ is not cyclic and take $L=G$. In this case we must have $D_C$ of order smaller than $|G|$ for every $C$, since $G$ is not cyclic. This proves that $\hat{kR}_{\rac}(G)=0$.
\end{proof}

In the rest of the section, $H$ will be a non trivial cyclic group. We will now be interested in finding which simple $kOut(H)$-modules are also $\hat{kR}_{\rac}(H)$-modules.

As we said, $\hat{kB}(H)$ is isomorphic to $k Out(H)$. This isomorphism is given by associating to each automorphism $\sigma$ of $H$, the $H\times H$-set $X_{\sigma}=(H\times H)/\Delta_{\sigma}(H)$ where
\begin{displaymath}
 \Delta_{\sigma}(H)=\{(a,\, \sigma (a))\mid a\in H\}.
\end{displaymath}
The inner automorphisms are all identified  with $X_{id}$.  
Observe that if $H$ is cyclic, then Out$(H)=$ Aut$(H)$ and  $\Delta_{\sigma}(H)$ is cyclic. 

Recall that for an abelian group $G$, the dimension of $k R_{\rac}(G)$ over $k$ is the number of cyclic subgroups of $G$. Besides, the elements of the form $[1_C\!\uparrow_C^G]$ with $C$ cyclic generate $kR_{\rac}(G)$, and so they are a basis. We shall denote this basis by $\beta(G)$.

\begin{lema}
 \label{prenuc}
Let $K$ be a cyclic group with $|K|$ a proper divisor of $|H|$. Let $\sigma$ be an automorphism of $H$. Consider a $\rac (H\times H)$-module $T$ of the form
\begin{displaymath}
 1_C\ind{C}{H\times K}\otimes_{\rac K}1_D\ind{D}{K\times H}
\end{displaymath}
with $C\leqslant H\times K$ and $D\leqslant K\times H$ cyclic subgroups.
Then the coefficient of $[1_{\Delta_{\sigma}(H)}\ind{\Delta_{\sigma}(H)}{H\times H}]$ for $[T]$ in terms of the basis $\beta(H\times H)$ is different from $0$ if and only if
\begin{itemize}
 \item [i)] $\pi_H(C)=H=\pi_H(D)$, with $\pi_H(C)$ and $\pi_H(D)$ being the projections on $H$ of $C$ and $D$ respectively.
 \item[ii)] $C=<(h,\, x)>\ \Longleftrightarrow\ D=<(x,\, \sigma (h))>$.
\end{itemize}
In this case, the coefficient is $|K|/|H|$.
\end{lema}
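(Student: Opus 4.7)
The plan is to compute $[T]$ explicitly as a permutation $\rac(H\times H)$-module via the classical composition formula for transitive bisets, and then to extract the coefficient of $[1_{\Delta_\sigma(H)}\ind{\Delta_\sigma(H)}{H\times H}]$ by a short character argument. The structural observation I would exploit is that $H\times H$ has exponent $|H|$, so every cyclic subgroup of $H\times H$ has order at most $|H|$; since $|\Delta_\sigma(H)|=|H|$, the subgroup $\Delta_\sigma(H)$ is maximal among cyclic subgroups of $H\times H$. Consequently, in the basis expansion $[T]=\sum_{L'}a_{L'}[1_{L'}\ind{L'}{H\times H}]$ over cyclic $L'\leqslant H\times H$, a generator $g_0$ of $\Delta_\sigma(H)$ lies in only one basis stabilizer, namely $\Delta_\sigma(H)$ itself, and evaluating the rational character at $g_0$ yields
\begin{displaymath}
a_{\Delta_\sigma(H)}\;=\;\chi_{[T]}(g_0)/|H|.
\end{displaymath}

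The next step is to apply the Mackey decomposition (Proposition 2.3.24 in \cite{biset}) to the biset composition $(H\times K)/C\times_K(K\times H)/D$. Since $K$ is abelian, the double cosets $p_2(C)\backslash K/p_1(D)$ are simply the cosets of $p_2(C)\cdot p_1(D)$ in $K$, and a short direct computation of the stabilizer of a basepoint shows that
\begin{displaymath}
L\;:=\;C\ast D\;=\;\{(u,v)\in H\times H:\exists c\in K,\ (u,c)\in C\ \text{and}\ (c,v)\in D\}
\end{displaymath}
does not depend on the chosen representative. Hence $[T]$ equals $N$ copies of $[1_L\ind{L}{H\times H}]$ with $N=|K|/|p_2(C)\cdot p_1(D)|$, so $\chi_{[T]}(g_0)=N\cdot|H\times H|/|L|$ when $g_0\in L$ and $0$ otherwise.

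Necessity of (i) and (ii) is now transparent. If $\pi_H(C)\subsetneq H$ or $\pi_H(D)\subsetneq H$, then $L\subseteq\pi_H(C)\times\pi_H(D)$ cannot contain $g_0=(h,\sigma(h))$, so (i) is necessary. Assuming (i) and using that $|K|$ divides $|H|$, one finds $|C|=|H|=|D|$ and both restrictions $\pi_H|_C$, $\pi_H|_D$ are isomorphisms. Writing $C=\langle(h,x)\rangle$ with $h$ a generator of $H$, the equation $(h,c)\in C$ has the unique solution $c=x$, so $g_0\in C\ast D$ is equivalent to $(x,\sigma(h))\in D$; since $\langle(x,\sigma(h))\rangle$ has order $|H|=|D|$, this forces $D=\langle(x,\sigma(h))\rangle$, which is condition (ii). The converse direction (taking $c=x$) is immediate.

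Finally, under (i), a projection count using the identity $|AB|\cdot|A\cap B|=|A|\cdot|B|$ in the abelian group $K$ yields
\begin{displaymath}
|L|\;=\;\frac{|p_2(C)\cap p_1(D)|\cdot|C|\cdot|D|}{|p_2(C)|\cdot|p_1(D)|}\;=\;\frac{|H|^2}{|p_2(C)\cdot p_1(D)|},
\end{displaymath}
so $\chi_{[T]}(g_0)=N\cdot|H|^2/|L|=|K|$, and hence $a_{\Delta_\sigma(H)}=|K|/|H|$. The main technical point is verifying that the stabilizer $L$ and the orbit count $N$ have the claimed form when unpacking Bouc's composition formula in the abelian setting; once this is in hand, the remaining arithmetic with projections is routine.
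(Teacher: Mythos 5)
Your proof is correct, and it follows the same overall skeleton as the paper's while swapping out the central computation. The opening step is identical in substance: since $H\times H$ has exponent $|H|$, the subgroup $\Delta_\sigma(H)$ is maximal among cyclic subgroups, so the desired coefficient is $\chi_{[T]}(g_0)/|H|$ for $g_0$ a generator of $\Delta_\sigma(H)$; the paper reaches this via the explicit Artin-induction coefficient formula (Theorem 15.4 of Curtis--Reiner), in which the M\"obius sum collapses to a single term. Where you genuinely diverge is in evaluating $\chi_{[T]}(g_0)$: the paper stays at the level of characters and invokes Bouc's Lemma 7.1.3,
\begin{displaymath}
\tau(a,\sigma(a))=\frac{1}{|K|}\sum_{x\in K}\tau_C(a,x)\,\tau_D(x,\sigma(a)),
\end{displaymath}
reading off both the nonvanishing criterion and the value $|K|/|H|$ from the values of induced trivial characters, whereas you work at the level of bisets, using the Mackey formula to identify $T$ with $N=|K|/|p_2(C)p_1(D)|$ copies of the permutation module on $(H\times H)/(C*D)$ (all twists ${}^{(c,1)}D$ coincide because $K$ is abelian) and then evaluating a permutation character of the abelian group $H\times H$. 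Both engines are standard and yield the same arithmetic; yours has the minor benefit of exhibiting $T$ explicitly as a multiple of a single transitive permutation module, while the paper's character sum delivers the uniqueness of $x$ and the identifications $C=\langle(a,x)\rangle$, $D=\langle(x,\sigma(a))\rangle$ in one stroke. One small caution: your displayed formula for $|C*D|$ is missing a factor $|k_2(C)\cap k_1(D)|^{-1}$ in general, but this is harmless since you only apply it under hypothesis (i), where $|C|=|D|=|H|$ forces both of these kernels to be trivial.
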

\begin{proof}
Let $\tau$ be the character associated with $T$. By Theorem 15.4 in Curtis and Reiner \cite{curtis}, the coefficients of $\tau$ in terms of the basis $\beta(H\times H)$ are given in the following way: For $G\leqslant H\times H$ cyclic, we have
\begin{displaymath}
 q_G=\frac{1}{[H\times H:G]}\sum_{\substack{G\leqslant G^*}}\mu ([G^*:G])\tau (z^*)
\end{displaymath}
where $\{G^*\}$ runs over all cyclic subgroups containing $G$, $\mu$ is the Möbius function and $z^*$ is a generator of $G^*$.

Since the exponent of $(H\times H)$ is $|H|$, then $\Delta_{\sigma}(H)$ is a maximal cyclic subgroup, that is, it is not properly contained in any other cyclic subgroup. Hence, if  $q_{\sigma}$ denotes $q_{\Delta_{\sigma}(H)}$, then 
$q_{\sigma}=(1/|H|)\tau(z)$ for $z$ a generator of $\Delta_{\sigma}(H)$.

Suppose that $H=<a>$. Let $\tau_C$ and $\tau_D$ be the characters of $1_C\ind{C}{H\times K}$ and $1_D\ind{D}{K\times H}$, respectively. By Lemma 7.1.3 in Bouc \cite{biset}, we have
\begin{displaymath}
 \tau (a,\, \sigma (a))= \frac{1}{|K|}\sum_{x\in K}\tau_C(a,\, x)\tau_D(x,\, \sigma (a)).
\end{displaymath}
So, $q_{\sigma}$ is different from $0$ if and only if there exists $x\in K$ such that $(a,\, x)$ belongs to  $C$ and $(x,\, \sigma(a))$ belongs to  $D$, and in this case
\begin{displaymath}
 \tau_C(a,\, x)=[H\times K:C]\quad \textrm{and}\quad \tau_D(x,\, \sigma (a))=[K\times H:D].
\end{displaymath}
Now, since $|K|$ is a proper divisor of $|H|$, then $(a,\, x)$ and $(x,\,\sigma (a))$ are elements of order $|H|$, and we must have $<(a,\, x)>=C$ and $<(x,\, \sigma(a))>=D$. For the same raison, $x$ is the only element of $K$ such that $(a,\, x)$ is in $C$ (respectively $(x,\, \sigma (a))$ is in $D$). From this we have $q_{\sigma}=|K|/|H|$.
\end{proof}

If $\sigma$ is an automorphism of $H$, we will write $1_{\sigma}$ for the trivial $\rac{\Delta_{\sigma}(H)}$-module. Suppose that $|H|=m$. We take the smallest non negative representatives of
$(\zze/m\zze)^{\times}$ and
 denote by $\sigma_t$ the automorphism of $H$ corresponding to the class of $t$ in $(\zze/m\zze)^{\times}$. By composition with the isomorphism between $\hat{kB}(H)$ and $k(\zze/m\zze)^{\times}$, we can consider the following extension of the linearization morphism 
\begin{displaymath}
 \ell_H:k(\zze/m\zze)^{\times}\longrightarrow \hat{kR}_{\rac}(H),
\end{displaymath}
which sends $[t]$ to the class of $[1_{\sigma_t}\ind{\Delta_{\sigma_t}(H)}{H\times H}]$.

\begin{prop}
\label{elnuc}
Let $x_n$ be
\begin{displaymath}
\sum_{[t]\in Ker\pi_{m,\,n}}[t].
\end{displaymath}
The kernel of $\ell_H$ is the ideal generated by $\{x_n\mid n\, \textrm{proper divisor of $m$}\}$.
\end{prop}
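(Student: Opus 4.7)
The plan is to verify both inclusions. Write $J$ for the ideal of $k(\zze/m\zze)^\times$ generated by $\{x_n : n \text{ proper divisor of } m\}$, and let $\tilde\ell_H:k(\zze/m\zze)^\times\to kR_\rac(H\times H)$ be the $k$-linear lift of $\ell_H$ defined by $[t]\mapsto [1_{\sigma_t}\ind{\Delta_{\sigma_t}(H)}{H\times H}]$. Since the linearization is a morphism of Green biset functors, Lemma \ref{mult} shows that $\ell_H$ is a ring homomorphism, so $\ker\ell_H$ is an ideal.

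For $J\subseteq\ker\ell_H$, fix a proper divisor $n$ of $m$ and set $K=H/\langle a^n\rangle$, which is cyclic of order $n$. The class of the $(H,H)$-biset $Y=\textrm{Inf}_K^H\circ\textrm{Def}_K^H=(H\times H)/\tilde\Delta$, where $\tilde\Delta=\{(h_1,h_2):h_1h_2^{-1}\in\langle a^n\rangle\}$, belongs to $I_{kR_\rac}(H)$ by construction. I would expand $[Y]$ in the basis $\beta(H\times H)$ using the Möbius formula employed in the proof of Lemma \ref{prenuc}. Since $\tilde\Delta$ is normal in $H\times H$, the permutation character of $Y$ equals $n$ on $\tilde\Delta$ and zero elsewhere; a direct check shows that a generator of $\Delta_{\sigma_t}(H)$ lies in $\tilde\Delta$ exactly when $t\in\ker\pi_{m,n}$, yielding the coefficient $n/m$ on $[1_{\Delta_{\sigma_t}(H)}\ind{\Delta_{\sigma_t}(H)}{H\times H}]$ for each such $t$ and $0$ otherwise. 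Every other cyclic subgroup $C\leq H\times H$ that occurs in the expansion differs from all $\Delta_{\sigma_t}(H)$, and an analysis of the Bouc decomposition (the intermediate quotient $B_1/A_1$ must then have order $<m$) shows that $[1_C\ind{C}{H\times H}]\in I_{kR_\rac}(H)$. Putting it all together yields $\frac{n}{m}\tilde\ell_H(x_n)\in I_{kR_\rac}(H)$, and dividing by the unit $n/m$ gives $\ell_H(x_n)=0$.

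For $\ker\ell_H\subseteq J$, let $V=\tilde\ell_H(k(\zze/m\zze)^\times)\subseteq kR_\rac(H\times H)$ and let $\pi:kR_\rac(H\times H)\to V$ be the projection along the remaining vectors of the basis $\beta(H\times H)$. If $y\in\ker\ell_H$ then $\tilde\ell_H(y)\in I_{kR_\rac}(H)$ and $\pi(\tilde\ell_H(y))=\tilde\ell_H(y)$, so it suffices to prove $\pi(I_{kR_\rac}(H))\subseteq\tilde\ell_H(J)$. By Lemma \ref{coci} i) combined with Artin's theorem, $I_{kR_\rac}(H)$ is $k$-spanned by compositions $[1_C\ind{C}{H\times K}]\circ [1_D\ind{D}{K\times H}]$ with $K$ cyclic of proper divisor order $n$ and $C,D$ cyclic subgroups. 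Lemma \ref{prenuc} ensures that only the pairs with $\pi_H(C)=H=\pi_H(D)$ contribute under $\pi$, and such $C,D$ must be graphs of homomorphisms $H\to K$; after fixing a generator $y$ of $K$, they are parametrized by $\psi(a)=y^q$ and $\phi(a)=y^p$ with $p,q\in\zze/n\zze$. Condition (ii) of Lemma \ref{prenuc} then reduces to the linear congruence $pt\equiv q\pmod n$, whose solution set $S\subseteq(\zze/m\zze)^\times$ depends only on $t$ modulo $n$ and is therefore a (possibly empty) union of cosets of $\ker\pi_{m,n}$. Consequently $\pi([1_C\ind{C}{H\times K}]\circ[1_D\ind{D}{K\times H}])=\frac{n}{m}\tilde\ell_H\left(\sum_i[t_i]\cdot x_n\right)$ for suitable coset representatives $t_i$, which lies in $\tilde\ell_H(J)$.

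The main obstacle is cleanly translating condition (ii) of Lemma \ref{prenuc}, which is phrased in terms of an ambiguous choice of generator of $C$, into the intrinsic congruence $pt\equiv q\pmod n$; once that reduction is in place, the $\ker\pi_{m,n}$-invariance of the solution set is automatic and the ideal structure of $J$ absorbs the resulting sums of translates of $x_n$.
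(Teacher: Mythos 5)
Your proof is correct and follows essentially the same route as the paper's: the forward inclusion expands a composite through a cyclic group of order $n$ (your $\textrm{Inf}_K^H\textrm{Def}_K^H$ is exactly the paper's $[1_C\ind{C}{H\times K}]\circ[1_D\ind{D}{K\times H}]$ with $C=\langle(a,x)\rangle$, $D=\langle(x,a)\rangle$) via the same M\"obius/character formula, and the reverse inclusion uses Lemma \ref{coci}, Artin induction and Lemma \ref{prenuc} to show that the component of each such composite along the $[1_{\sigma_t}\ind{\Delta_{\sigma_t}(H)}{H\times H}]$ is a sum of translates of some $x_n$. Your explicit projection $\pi$ and the congruence $pt\equiv q\pmod n$ are a slightly cleaner packaging of the paper's argument, not a different one.
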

\begin{proof}
First suppose $n$ is a proper divisor of $m$, let us see that $x_n$ is in the kernel of  $\ell_H$. Let $K$ be a cyclic group of order $n$ and suppose that $H=<a>$ and $K=<x>$. Let $C=<(a,\, x)>$ and $D=<(x,\,a)>$. If $[t]$ is in the kernel of $\pi_{m,\, n}$, then
\begin{displaymath}
<(x,\, a)>=<(x,\, a)^t>=<(x,\, \sigma_t(a))>.
\end{displaymath}
So, by the previous lemma, the isomorphism class of
\begin{displaymath}
\sum_{\substack{[t]\in \textrm{}Ker \pi_{m,\, n}}}1_{\sigma_t}\ind{\Delta_{\sigma_t}(H)}{H\times H}
\end{displaymath}
appears with coefficient $|K|/|H|$ in the decomposition of
\begin{displaymath}
[1_C\ind{C}{H\times K}\otimes_{\rac K}1_D\ind{D}{K\times H}]
\end{displaymath}
in terms of $\beta(H\times H)$. Clearly, this element is $0$ in $\hat{kR}_{\rac}(H)$. Besides, if any other basic element of the form $[1_{\sigma_r}\ind{\Delta_{\sigma_r}(H)}{H\times H}]$ appears in this decomposition, then there exists an integer $d$ such that $(x,\, \sigma_r (a))^d=(x,\, a)$, but it is easy to see that this implies $[r]\in Ker\pi_{m,\, n}$.  This means that the rest of the  elements  appearing in this decomposition come from $(H\times H)$-sets which are $0$ in $\hat{kB}(H)$. Hence, $\ell_H(x_n)$ is $0$ in $\hat{kR}_{\rac}(H)$. 

Suppose now that
\begin{displaymath}
 \sum_{i=1}^da_i[s_i]
\end{displaymath}
with $[s_i] \in (\zze/m\zze)^{\times}$ and $a_i \in k$ is in the kernel of $\ell_H$. Using Lemma \ref{coci} and the Artin Induction Theorem, this means that 
\begin{equation}
 \sum_{i=1}^da_i[1_{\sigma_{s_i}}\ind{\Delta_{\sigma_{s_i}}(H)}{H\times H}] 
\end{equation}
can be written as
\begin{equation}
 \sum_{j=1}^e b_j[1_{C_j}\ind{C_j}{H\times K_j}]\circ[1_{D_j}\ind{D_j}{K_j\times H}],
\end{equation}
with $C_j$, $D_j$ and $K_j$ cyclic groups, and  $|K_j|$ a proper divisor of $|H|$.  By Lemma \ref{prenuc}, if $[1_{\sigma_{s_i}}\ind{\Delta_{\sigma_{s_i}}(H)}{H\times H}]$ is  in the decomposition of
$[1_{C_j}\ind{C_j}{H\times K_j}]\circ[1_{D_j}\ind{D_j}{K_j\times H}]$, we must have
$C_j=<(a,\, x)>$ and $D_j=<(x,\,\sigma_{s_i}(a))>$ for some $x\in K_j$. Now, if $n_j$ is the order of $x$, then, such as we did before, the isomorphism class of
\begin{equation}
\sum_{\substack{[t]\in \textrm{}Ker \pi_{m,\, n_j}}}1_{\sigma_t\sigma_{s_i}}\ind{\Delta_{\sigma_t\sigma_{s_i}}(H)}{H\times H}
\end{equation}
appears with coefficient $|K_j|/|H|$ in this decomposition. This means that if  any element in this sum has coefficient different from $0$ in (2), then the whole sum has this same coefficient. Since all the elements in (1) and (3) are basic elements, then all the elements of (1) must be of the way described in (3), which proves the claim.
\end{proof}

Note that if $Ker\pi_{m,\, n}=1$ for some $n$ proper divisor of $m$, then \mbox{$[1_{\Delta (H)}\ind{\Delta (H)}{H\times H}]$} is $0$ in $\hat{kR}_{\rac}(H)$, and hence $\hat{kR}_{\rac}(H)=0$.

\begin{coro}
 \label{simret}
Let $H$ be a group such that $\hat{kR}_{\rac}(H)\neq 0$, and $V$ a simple $k Out(H)$-module. Then $V$ is a $\hat{kR}_{\rac}(H)$-module if and only if $V$ is primitive.
\end{coro}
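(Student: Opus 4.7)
The plan is to combine Proposition \ref{elnuc} with the standard behavior of a class-sum of a normal subgroup on a simple module. Since the linearization $k\lin_{\rac}$ is an epimorphism of Green biset functors, the induced map $\ell_H\colon k(\zze/m\zze)^{\times}\to \hat{kR}_{\rac}(H)$ is surjective, so a simple $k(\zze/m\zze)^{\times}$-module $V$ carries a (necessarily unique) compatible structure of $\hat{kR}_{\rac}(H)$-module if and only if $Ker(\ell_H)$ annihilates $V$. By Proposition \ref{elnuc}, $Ker(\ell_H)$ is the ideal of the commutative algebra $k(\zze/m\zze)^{\times}$ generated by the class-sums $x_n=\sum_{[t]\in Ker\pi_{m,n}}[t]$ indexed by proper divisors $n$ of $m$, so commutativity reduces the question to whether each individual $x_n$ acts as zero on $V$.

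Next I would fix such a proper divisor $n$, set $N=Ker\pi_{m,n}$, and analyze how $x_n$ acts on the simple module $V$. Because $k$ has characteristic zero, the element $e_N=|N|^{-1}x_n$ is an idempotent of $k(\zze/m\zze)^{\times}$ whose image on any module is the subspace of $N$-fixed vectors. As the ambient group is abelian, $N$ is normal, so $V^N=e_NV$ is a $k(\zze/m\zze)^{\times}$-submodule of $V$; by simplicity, $V^N\in\{0,V\}$. In the case $V^N=V$ the subgroup $N$ acts trivially on $V$ and $x_n$ acts as the nonzero scalar $|N|$, whereas in the case $V^N=0$ the element $x_n$ acts as zero. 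Hence $x_n V=0$ if and only if $N$ does not act trivially on $V$.

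Combining these two reductions, $V$ becomes a $\hat{kR}_{\rac}(H)$-module if and only if for every proper divisor $n$ of $m$ the subgroup $Ker\pi_{m,n}$ fails to act trivially on $V$, which is literally the definition of primitivity. I do not anticipate any real obstacle here: Proposition \ref{elnuc} has already absorbed the nontrivial content, namely the identification of the kernel of $\ell_H$, and what remains is the elementary observation that the class-sum of a normal subgroup of order invertible in $k$ acts as $|N|$ times the projection onto $N$-invariants.
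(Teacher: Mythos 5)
Your argument is correct and takes essentially the same route as the paper: both reduce the statement, via Proposition \ref{elnuc} and the surjectivity of $\ell_H$, to the claim that $x_n$ annihilates the simple module $V$ exactly when $Ker\,\pi_{m,\,n}$ fails to act trivially, and both establish that claim by the averaging/fixed-vector argument. Your idempotent formulation merely makes explicit a step the paper leaves implicit in its final contradiction, namely that the subspace of $Ker\,\pi_{m,\,n}$-fixed vectors is a submodule of $V$ and hence equals $0$ or $V$.
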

\begin{proof}
We know that $H$ must be cyclic. Suppose that it has order $m>1$ and let $n$ be a proper divisor of $m$. 

Suppose that $V$ is a simple $k(\zze /m\zze)^{\times}$-module that is also a $\hat{kR}_{\rac}(H)$-module. If $Ker\,\pi_{m,\, n}$ acts trivially on $V$, then there exists $v\neq 0$ in $V$ such that $x\cdot v=v$ for all $x\in Ker\,\pi_{m,\, n}$, but this implies $x_n\cdot v\neq 0$, a contradiction. 

Now suppose that $V$ is a simple primitive $k(\zze /m\zze)^{\times}$-module. If there exists $v\in V$ such that $x_n\cdot v=w$ with $w\neq 0$, then for all $x\in Ker\pi_{m,\, n}$ we have
\begin{displaymath}
 x\cdot w=x\cdot (x_n\cdot v)=xx_n\cdot v=x_n\cdot v=w
\end{displaymath}
since $xx_n=x_n$. But this is a contradiction.
\end{proof}

\section{Other examples of the conjecture}

In this section we will prove that $\com R_{\com}$ and $RB_C$ for $C$ of prime order, also satisfy Conjecture \ref{laconj}. In order to do this, we will first prove that if $A$ is a Green biset functor such that for any simple $A$-module its minimal groups are isomorphic, then $A$ satisfies Conjecture \ref{laconj}. Then we will prove that this is the case of 
$\com R_{\com}$ and $RB_C$ for $C$ of prime order.

\begin{nota}
\label{notaiso}
Observe that if $G$ and $H$ are isomorphic groups in the category $grp$, then they are also isomorphic in $\cp_A$ for any Green functor $A$. 
To prove it, suppose that $\varphi :H\rightarrow G$ is a group isomorphism. Consider $X=\biso(\varphi)$ and  $Y=\biso(\varphi^{-1})$, then define $\alpha_1=A(\ovder{X})(\lae_A)$ and $\alpha_2=A(\ovder{Y})(\lae_A)$.  It is not hard to prove that $\alpha_1\circ \alpha_2=A(\ovder{G})(\lae_A)$ and that $\alpha_2\circ \alpha_1=A(\ovder{H})(\lae_A)$. 

 If $W$ is an $A(G\times G)$-module, we denote by $^{\varphi}W$ the $A(H\times H)$-module $W$ with action of $a\in A(H\times H)$ on $w\in W$ given by $a\cdot w:=(\alpha_1\circ a\circ \alpha_2)w$.
\end{nota}

The following proposition will provide us two other examples of Green biset functors that satisfy the conjecture.

\begin{prop}
\label{isosim}
 Suppose that $A$ is a Green biset functor for which the minimal groups of each simple $A$-module form a single isomorphism class. Let $\mathcal{S}_A$ be the set of equivalence classes of couples $(H,\, V)$, where $\hat{A}(H)\neq 0$, $V$ is a simple $\hat{A}(H)$-module and $(H,\, V)\sim (G,\, W)$ if there exists an isomorphism of groups $\varphi:H\rightarrow G$ such that $V\cong {}^{\varphi}W$. 

Then the isomorphism classes of simple $A$-modules are in one-to-one correspondence with the elements of $\mathcal{S}_A$. 
\end{prop}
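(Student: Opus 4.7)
The plan is to adapt the classification of simple biset functors from Chapter~4 of \cite{biset} to the setting of a general Green biset functor $A$, working inside the category $\cp_A$ via the equivalence of Proposition \ref{catmod}. I construct two mutually inverse maps between the set of isomorphism classes of simple $A$-modules and $\mathcal{S}_A$; the hypothesis that the minimal groups of a simple $A$-module are unique up to isomorphism is what makes one of these maps well-defined.

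For the map from simples to pairs, given a simple $A$-module $S$, pick a minimal group $H$ for $S$ and set $V=S(H)$. By Lemma \ref{minis}, $\hat{A}(H)\neq 0$ and $V$ is a simple $\hat{A}(H)$-module, so $(H,V)\in \mathcal{S}_A$. If $H'$ is another minimal group and $\psi:H\to H'$ is a group isomorphism, the mutually inverse morphisms $\alpha_1\in A(H'\times H)$ and $\alpha_2\in A(H\times H')$ of Notation \ref{notaiso} induce an $R$-linear isomorphism $S(\alpha_1):S(H)\to S(H')$; a direct computation using functoriality and the identity $\alpha_2\circ\alpha_1=A(\ovder{H})(\lae_A)$ shows that it is an isomorphism of $\hat{A}(H)$-modules $S(H)\cong {}^{\psi}S(H')$. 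Hence the class of $(H,V)$ in $\mathcal{S}_A$ depends only on the isomorphism class of $S$.

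For the map from pairs to simples, given $(H,V)\in \mathcal{S}_A$, regard $V$ as an $A(H\times H)$-module via the quotient $A(H\times H)\twoheadrightarrow \hat{A}(H)$ and define an $R$-linear functor on $\cp_A$ by
\[
L_{H,V}(G)=A(G\times H)\otimes_{A(H\times H)}V,\qquad \gamma\cdot(\varphi\otimes v):=(\gamma\circ\varphi)\otimes v.
\]
Let
\[
J_{H,V}(G)=\Big\{\sum_{i}\varphi_i\otimes v_i\in L_{H,V}(G) \,\Big|\, \sum_{i}(\psi\circ\varphi_i)\cdot v_i=0\ \forall \psi\in A(H\times G)\Big\},
\]
which is a subfunctor of $L_{H,V}$. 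For $K\in\zz$ with $|K|<|H|$, any composition $\psi\circ\varphi$ with $\varphi\in A(K\times H)$ and $\psi\in A(H\times K)$ lies in $I_A(H)$ and so annihilates $V$; hence $L_{H,V}(K)\subseteq J_{H,V}(K)$ and $(L_{H,V}/J_{H,V})(K)=0$. At $G=H$ the evaluation map $A(H\times H)\otimes_{A(H\times H)}V\to V$ is an isomorphism, and simplicity of $V$ over $\hat{A}(H)$ forces $J_{H,V}(H)=0$, so $S_{H,V}:=L_{H,V}/J_{H,V}$ satisfies $S_{H,V}(H)\cong V$ and has $H$ as a minimal group; thus the starting pair is recovered.

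The main obstacle is showing that $J_{H,V}$ is the unique maximal subfunctor of $L_{H,V}$, so that $S_{H,V}$ is in fact simple. The argument mimics the biset-functor case: any element of $L_{H,V}(G)$ not in $J_{H,V}(G)$ has some composition $\psi\circ\varphi_i$ producing a nonzero vector in $V$, and simplicity of $V$ over $\hat{A}(H)$ together with functoriality of $L_{H,V}$ in $\cp_A$ and Lemma \ref{mult} lets one reach, modulo $J_{H,V}$, any element of $L_{H,V}(G')$. Once this is established, the remaining verifications are routine: equivalent pairs yield isomorphic simples (transfer along $\psi$ and the induced morphisms $\alpha_1,\alpha_2$ in $\cp_A$), and isomorphic simples yield equivalent pairs (matching minimal groups, which are unique up to isomorphism by assumption, and transferring the $\hat{A}(H)$-module structures), closing the bijection.
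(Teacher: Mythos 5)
Your proposal is correct and follows essentially the same route as the paper: the same assignment $S\mapsto(H,S(H))$ via Lemma \ref{minis}, the same construction $S_{H,V}=L_{H,V}/J_{H,V}$, and the same use of minimality plus the single-isomorphism-class hypothesis to close the bijection. The only difference is cosmetic: where you sketch the argument that $J_{H,V}$ is the unique maximal subfunctor, the paper outsources that step to Lemma 1 of Bouc's \emph{ensemble} paper and makes the adjunction $Hom_{\cp_A}(L^A_{H,V},S)\cong Hom_{A(H\times H)}(V,S(H))$ explicit.
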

\begin{proof}
If $S$ is a simple $A$-module and $H$ is a minimal group for $S$, then it is minimal for any other $A$-module isomorphic to $S$. Also, as we have seen before, in this case $\hat{A}(H)\neq 0$ and  $S(H)$ is a simple $\hat{A}(H)$-module. To $S$ we associate the pair $(H,\, S(H))$. The hypothesis over the isomorphism class of $H$, justifies that this choice is well defined, because if $G$ is a group isomorphic to $H$ then $S(H)$ is isomorphic to $S(G)$ as stated in the assertion. On the other hand, if $(H,\, V)$ is a couple satisfying the hypothesis, then we will assign to it the module $S_{H,\, V}^A$ defined as the quotient $L^A_{H,\, V}/J^A_{H,\, V}$, where
\begin{displaymath}
L^A_{H,\, V}(K)=A(K\times H)\otimes_{A(H\times H)}V,\quad L_{H,\, V}(a)(x\otimes v)=(a\circ x)\otimes v
\end{displaymath}
for $a\in A(G\times K)$, and
\begin{displaymath}
 J^A_{H,\, V}(K)=\Big\{\sum_{i=1}^nx_i\otimes n_i\mid 
 \sum_{i=1}^n(y\circ x_i)\cdot n_i=0\ \forall y \in A(H\times K) \Big\}.
\end{displaymath}
Since $V$ is a simple $A(H\times H)$-module, then by Lemma 1 in Bouc \cite{ensemble}, $L^A_{H,\, V}$ has a unique simple quotient, this is $S_{H,\, V}^A$. This quotient satisfies $S_{H,\, V}^A(H)=V$.

Let us prove that if $(H,\, V)\sim (G,\, W)$, then $S_{H,\, V}^A\cong S_{G,\, W}^A$. Let $\varphi:H\rightarrow G$ be the group isomorphism which makes $V$ and ${}^{\varphi}W$
isomorphic as $A(H\times H)$-modules, then 
$L^A_{H,\, V}\cong L^A_{H,\, {}^{\varphi}W}$. We prove now that
$L^A_{H,\, {}^{\varphi}W}\cong L^A_{G,\, W}$.

Let $K\in \zz$, define
\begin{displaymath}
\mu_K : A(K\times G)\times W\longrightarrow A(K\times H)\otimes_{A(H\times H)} {}^{\varphi}W,
\end{displaymath}
by $\mu_K(a,\,  v)=a\circ \alpha_1\otimes_{A(H\times H)}v$, where $\alpha_1$ is defined as in the previous notation. Observe that $\mu_K$ can be extended to $A(K\times G)\otimes_{A(G\times G)} W$, since if  $b$ is in $A(G\times G)$, then
\begin{eqnarray*}
 \mu_K(a,\, bv) & = & a\circ \alpha_1\otimes_{A(H\times H)}bv\\
                & = & a\circ \alpha_1\otimes_{A(H\times H)}(\alpha_1\circ \alpha_2\circ b\circ \alpha_1\circ \alpha_2)v
\end{eqnarray*}
where $\alpha_2$ is defined as in the previous notation.  
Then
\begin{eqnarray*}
 \mu_K(a,\, bv) & = & a\circ \alpha_1\otimes_{A(H\times H)}(\alpha_2\circ b\circ\alpha_1)\cdot v\\
                & = & a\circ (\alpha_1\circ \alpha_2\circ b\circ\alpha_1)\otimes_{A(H\times H)}v \\
                & = & a\circ b\circ \alpha_1\otimes_{A(H\times H)}v\\
                & = & \mu_K(a\circ b,\, v).
\end{eqnarray*}
Clearly $\mu$ is natural in $K$. The inverse of $\mu$ is defined in a similar way. So $L^A_{H,\, V}$ is isomorphic to $L^A_{G,\, W}$ and hence, their unique simple quotients $S_{H,\, V}^A$ and $S_{G,\, W}^A$ are isomorphic.

Finally, we prove these assignments define a bijection.

The functor from $A(H\times H)$-mod to $\mathcal{P}_A $ that sends $V$ to $L^A_{H,\, V}$ is left adjoint of the evaluation $\mathcal{P}_A\rightarrow A(H\times H)$-mod sending each functor $M$ to $M(H)$. With this we can prove that if  $S$ is a simple $A$-module and $S(H)\neq 0$, then $S$ is isomorphic to $S_{H,\, V}^A$ with $V=S(H)$. Since $V$ is a simple $A(H\times H)$-module, then
\begin{displaymath}
 Hom_{\mathcal{P}_A}(L^A_{H,\, V},\, S)\cong Hom_{A(H\times H)}(V,\, S(H))
\end{displaymath}
implies that $S$ is a simple quotient of $L^A_{H,\, V}$ and hence it is isomorphic to $S_{H,\, V}^A$.

Now take a couple $(H,\, V)$ with $\hat{A}(H)\neq 0$ and $V$ a simple $\hat{A}(H)$-module. We see that $H$ is minimal for $S_{H,\, V}^A$: Let $K$ be a group of smaller order than $H$. Since $V$ is a $\hat{A}(H)$-module, from the definition of $S_{H,\, V}^A$, we have that $S_{H,\, V}^A(K)=0$.
\end{proof}

\subsection*{$\com R_{\com}$-modules}

\begin{prop}
\label{CRC}
 $\com R_{\com}$ is the only simple $\com R_{\com}$-module. In particular, it is a simple Green biset functor. 
\end{prop}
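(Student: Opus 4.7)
The plan is to apply Proposition \ref{isosim} to $A = \com R_{\com}$. For this I need to verify the hypothesis that the minimal groups of every simple $A$-module form a single isomorphism class; by Lemma \ref{minis}, this will follow once I show $\hat{A}(H) = 0$ for every nontrivial group $H$, since then the trivial group is the only possible minimal group.

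The crucial observation is that over $\com$ the irreducible representations of $H \times H$ are exactly the outer tensor products $V \boxtimes W$ of irreducibles $V$ and $W$ of $H$. By the same calculation as in Lemma \ref{prodmod} (whose proof only uses the Bouc decomposition of $\oviz{G}$ and adapts verbatim to $\com R_{\com}$), the class $[V \boxtimes W] \in A(H \times H)$ equals the composition $[V] \circ [W]$ in $\mathcal P_A$, where $[V]$ is viewed in $A(H \times 1)$ and $[W]$ in $A(1 \times H)$, both identified with classes in $\com R_{\com}(H)$. Since $|1| < |H|$ whenever $H$ is nontrivial, this composition lies in $I_A(H)$. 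Because these classes span $A(H \times H)$ over $\com$, we conclude $\hat{A}(H) = 0$ for every $H \neq 1$.

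Since $\hat{A}(1) = A(1) = \com$, whose only simple module is $\com$ itself, the set $\mathcal S_A$ consists of the single class $(1, \com)$, and Proposition \ref{isosim} produces a unique simple $A$-module up to isomorphism, namely $S^A_{1, \com}$. It remains to identify this simple with $\com R_{\com}$ itself. From the construction, $L^A_{1, \com}(K) = A(K) \otimes_\com \com = A(K)$ naturally as $A$-modules, so $L^A_{1, \com} = \com R_{\com}$, and I need only check that $J^A_{1, \com} = 0$. Unpacking $\circ$ as in Lemma \ref{prodmod}, for $z \in A(K)$ and $y \in A(K)$ viewed respectively in $A(K \times 1)$ and $A(1 \times K)$, the element $y \circ z \in A(1) = \com$ is the class of the $\com$-vector space $y \otimes_{\com K} z$. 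If $z = \sum a_i [V_i]$ with $V_i$ distinct irreducibles and some $a_{i_0} \neq 0$, taking $y = [V_{i_0}^*]$ gives $y \circ z = a_{i_0} \neq 0$ by Schur orthogonality, so no nonzero $z$ lies in $J^A_{1, \com}(K)$.

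The main obstacle will be the $\com R_{\com}$-analogue of Lemma \ref{prodmod} and, relatedly, unwinding $y \circ z$ as the tensor product over $\com K$; once this interpretation of $\circ$ in $\mathcal P_A$ is in hand, the rest of the argument reduces to classical facts about complex representations.
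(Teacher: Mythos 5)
Your argument is correct and follows essentially the same route as the paper: the key input in both is that complex irreducibles of a product are outer tensor products (Curtis--Reiner, Theorem 10.33), which forces every morphism in $\cp_{\com R_{\com}}$ to factor through the trivial group, so that the trivial group is the unique minimal group and Proposition \ref{isosim} applies with $\mathcal{S}_A=\{(1,\,\com)\}$. The only minor divergence is the final identification of the unique simple with $\com R_{\com}$: the paper observes that any nonzero left ideal $V$ has $V(1)=\com\ni\lae_{\com R_{\com}}$ and hence equals $\com R_{\com}$, whereas you compute $L^A_{1,\,\com}=\com R_{\com}$ and $J^A_{1,\,\com}=0$ via Schur orthogonality; both are valid.
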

\begin{proof} 
By Theorem 10.33 in Curtis and Reiner \cite{curtis}, for any $G$ and $K$ groups
\begin{displaymath}
\com R_{\com}(G\times K)\cong\com R_{\com}(G)\otimes_{\com}\com R_{\com}(K) 
\end{displaymath}
which, according to Lemma \ref{prodmod}, is equivalent to
\begin{displaymath}
 \com R_{\com}(G\times K)\cong\com R_{\com}(G\times 1)\circ\com R_{\com}(1\times K).
\end{displaymath}
This implies that if $M$ is a $\com R_{\com}$-module different from $0$, then $M(1)\neq 0$.
So, by Proposition \ref{isosim}, the only simple module corresponds to $(1,\, \com)$.
Now, if $V\leqslant \com R_{\com}$ is a left ideal different from $0$, then $V(1)\neq 0$, and hence $\lae_{\com R_{\com}}=\com \in V(1)$. From this we conclude $V\cong \com R_{\com}$. 
\end{proof}

In Chapter 7 of \cite{biset}, Bouc proves that $\com R_{\com}$ is semisimple as a biset functor, its  components are the simple $\com R_{\rac}$-modules. We know that $\com R_{\rac}\cong S_{1,\,\com}$ is not a $\com R_{\com}$-module. This proposition tells us that, in fact, none of these components is a $\com R_{\com}$-module.

\subsection*{$RB_C$-modules}

The following results concern the Yoneda-Dress construction at $C$ of $RB$, for a group $C$. It is denoted by $RB_C$. If $F$ is a biset functor, then $F_C$ sends each group G to $F(G\times C)$. In an element $\varphi\in RB(G\times H)$ it is defined as $F(\varphi \times C)$. More results on this construction can be found in Section 8.2 of \cite{biset}.

\begin{lema}
\label{AH}
 If $A$ is a Green biset functor and $C$ is a group, then $A_C$ is a Green biset functor.
 \end{lema}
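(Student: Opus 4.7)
The plan is to verify Definition \ref{defgreen} for $A_C$: I need to specify a bilinear product and a unit, then check associativity, the identity law, and functoriality. The key idea is that the Yoneda-Dress construction at $C$ should use the diagonal of $C$ to collapse repeated copies.

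For the product, given $a \in A_C(G) = A(G\times C)$ and $b \in A_C(H) = A(H \times C)$, I would first form $a \times b \in A(G \times C \times H \times C)$ using the Green structure of $A$, then apply the map induced by the injective group homomorphism
\[
\delta_{G,H} \colon G \times H \times C \hookrightarrow G \times C \times H \times C,\qquad (g,h,c)\mapsto (g,c,h,c),
\]
which corresponds, via its Bouc decomposition, to an $\textrm{Iso}\circ\textrm{Res}$ biset identifying the two copies of $C$. That is, set $a \times_{A_C} b := A(\delta_{G,H})(a \times b) \in A(G \times H \times C) = A_C(G\times H)$. For the unit, I would take $\varepsilon_{A_C} := A(\textrm{Inf}_1^C)(\varepsilon_A) \in A(C) = A_C(1)$ (after the canonical identification via $\lambda_C$).

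The three axioms of Definition \ref{defgreen} then reduce to bookkeeping with bisets. For associativity, both $(a \times_{A_C} b) \times_{A_C} c$ and $a \times_{A_C} (b \times_{A_C} c)$ arise from $a \times b \times c \in A(G\times C \times H \times C \times K \times C)$ by restriction to the triple diagonal $\{(g,c,h,c,k,c)\}$; they agree up to the canonical isomorphism $\alpha_{G,H,K}$ because of associativity of $\times$ in $A$ together with the fact that the two nested diagonals of $C$ yield the same triple diagonal as a subgroup of $C\times C\times C$. For the identity axiom, $\varepsilon_{A_C} \times_{A_C} a = A(\delta_{1,G})(\textrm{Inf}_1^C(\varepsilon_A) \times a)$; using functoriality of $\times$ in $A$ to pull the inflation outside, followed by the identity axiom for $A$, the expression collapses to $a$ up to $\lambda_G$. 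Functoriality of $\times_{A_C}$ follows immediately from that of $\times$ in $A$ together with the naturality of $A(\delta_{G,H})$ under morphisms in $\Omega_{R,\zz}$.

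The main obstacle will be the diagram chasing, particularly for associativity, which requires tracking the interaction of several bisets of inflation, restriction, and isomorphism type. A more conceptual alternative, which I might fall back on if the direct verification becomes unwieldy, is to work through the tensor-product formulation of Definition \ref{defgten}: the diagonal $C \to C \times C$ yields a natural transformation $A_C \otimes A_C \to (A\otimes A)_C$, and composing with the Yoneda-Dress images of $\mu$ and $e$ transports the monoid structure of $A$ in the tensor category of biset functors directly onto $A_C$, at the cost of invoking more machinery from \cite{biset}.
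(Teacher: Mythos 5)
Your proposal is correct and follows essentially the same route as the paper: the product is the external product of $A$ followed by the $\textrm{Iso}\circ\textrm{Res}$ biset along the diagonal subgroup identifying the two copies of $C$ (the paper's $X_C^G$), the unit is $A(\textrm{Inf}_1^{C})(\varepsilon_A)$, and the three axioms are checked by the same diagram chases (associativity via the triple diagonal, identity and functoriality via naturality of $\times_A$ with respect to bisets). The remaining work you flag is exactly the bookkeeping the paper itself carries out (and partly leaves to the reader).
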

\begin{proof}
That $A_C$ is a biset functor is Lemma 8.2.2 in \cite{biset}. 

Let $K$ and $G$ be groups. Define $D=\{(c,\, g,\, c)\mid c\in C,\, g\in G\}$ and let $f$ be the natural isomorphism from $D$ to $G\times C$. The product for $A_C$ is given by the following composition
\[\xymatrix{
A_C(K)\times A_C(G)\ar[r]^-{\times_A} & A(K\times C\times G\times C)\ar[rr]^-{A(K\times X_C^G)} && A_C(K\times G)
}\]
where $X_C^G=\biso(f)\times_D \bres_D^{C\times G\times C}$ and $\times_A$ is the product of $A$.
The identity element of $A_C$ is $\varepsilon_{A_C}=A(\binf_{1}^{1\times C})(\varepsilon_A)\in A_C(1)$.

Let us see that this composition
defines a product in $A_C$.

Associativity:

It is enough to prove that, for groups $K$, $G$ and $L$, the exterior square in the following diagram is commutative, considering the corresponding group isomorphisms in vertices in the middle and bottom right:

\footnotesize{\[
  \xymatrix{\ar @{} [dr] |{a} 
    A_C(K)\times A_C(G)\times A_C(L)\ar[d]_{(id,\, \times_A)}\ar[r]^{(\times_A,\, id)} & \ar @{} [dr] |{b} A_C(K\times C\times G)\times A_C(L)\ar[d]_{\times_A}\ar[r]^-{(A(\alpha), id)} & A_C(K\times G)\times A_C(L)\ar[d]_{\times_A}\\  
    \ar @{} [dr] |{c\quad } A_C(K)\times A_C(G\times C\times L)\ar[d]_{(id,\, A(G\times X_C^L))}\ar[r]^-{\times_A} & \ar @{} [dr] |{d} A_C(K\times C\times  G\times C\times L)\ar[d]_{A(K\times C\times G\times X_C^L)}\ar[r]^-{A(\beta)}
    & A_C(K\times G\times C\times L)\ar[d]_{A(K\times G\times X_C^L)}\\
    A_C(K)\times A_C(G\times L)\ar[r]_{\times_A} & A_C(K\times C\times G\times L)\ar[r]_{A(K\times X_C^{G\times L})} & A_C(K\times G\times L)
}
\]}\normalsize
with $\alpha=K\times X_C^G$ and $\beta=\alpha \times L\times C$.
The square $a$ is commutative since the product of $A$ is associative. Squares $b$ and $c$ commute because the product of $A$ is natural with respect to bisets. Finally, it is not hard to see that the composition of bisets in square $d$ are isomorphic, 
and so square $d$ is commutative.

Identity element:

If $a$ belongs to $A_C(G)$, then $\varepsilon_{A_C}\times a=A(1\times X_C^G)(A(\binf_1^{1\times C})(\lae_A)\times_A a)$. Now, $A(\binf_1^{1\times C})(\lae_A)\times_A a=A(\binf_1^{1\times C}\times G\times C)(\lae_A\times_A a)$, but it is straightforward to prove that $(1\times X_C^G)\times_{1\times C\times G\times C}(\binf_1^{1\times C}\times G\times C)$ is isomorphic to the identity biset $1\times G\times C$.

Functoriality with respect to bisets:

Let $Z$ be a $(G,\, L)$-biset and $Y$ be a $(K,\, D)$-biset. We should prove the commutativity of the exterior square in
\[
 \xymatrix{
A_C(L)\times A_C(D)\ar[d]_{(A_C(Z),\, A_C(Y))}\ar[r] & A_C(L\times C\times D)\ar[d]_{A(Z\times C\times Y\times C)}\ar[r] & A_C(L\times D)\ar[d]_{A_C(Z\times Y)}\\
A_C(G)\times A_C(K)\ar[r] & A_C(G\times C\times K)\ar[r] & A_C(G\times K)
},
\]
where in the rows we have the product $\times $ of $A_C$. The square on the left commutes since the product of $A$ is natural with respect to bisets. To verify the commutativity of the square on the right, we must only verify that the bisets involved are isomorphic.
\end{proof}

Following the ideas of Example \ref{bovbur}, it is not hard to see that with this product, the composition in $\cp_{RB_C}$, which we will denote by $\times^d$ is the following. Let $X$ be a $(K\times G\times C)$-set, and $Y$ be a $(G\times L\times C)$-set. Then we can consider
$X$ as a $(K,\, G)$-biset endowed with an action of $C$, which we will suppose on the right and which commutes with those of $K$  and $G$. In the same way, $Y$ is a $(G,\, L)$-biset with an action of $C$ that commutes with those of $G$ and $L$. Hence  $X\times_G^d Y$ is the $(K,\, L)$- biset $X\times_G Y$ with a diagonal action of $C$,
\begin{displaymath}
[x,\, y]c=[xc,\, yc].
\end{displaymath}

The identity in $RB_C(G\times G)$ is $RB_C(\ovder{G})(\{\bullet \})$ which, by Bouc's  decomposition for $\ovder{G}$, equals
$(G\times G\times C)/(\Delta (G)\times C)$.

Let $E$ be a subgroup of $G\times L\times C$ and $D$ be a subgroup of $L\times K\times C$, we will use the Notation 2.3.19 of \cite{biset}, $E*D$ to denote
\begin{displaymath}
 \{(g,\, k,\, c)\in G\times K\times C\mid \exists l\in L \textrm{ s. t. } (g,\, l,\, c)\in E \textrm{ and } (l,\, k,\, c)\in D\}.
\end{displaymath}

With this notation, we have a sort of a Mackey formula for 3-sets.

\begin{lema}
 \label{macbh}
Let $G$, $L$, $K$ and $C$ be groups. If $E$ is a subgroup of $G\times L\times C$ and $D$ is a subgroup of $L\times K\times C$, then we have an isomorphism of $G\times K\times C$-sets
\begin{displaymath}
 \big((G\times L\times C)/E\big)\times_L^d\big((L\times K\times C)/D\big)\cong \bigsqcup_{(l,\, c)} (G\times K\times C)/(E*{}^{(l,\, 1,\, c)}D)
\end{displaymath}
where $(l,\, c)$ is running through a set of representatives of the double cosets \mbox{$p_{2,\, 3}(E)\!\setminus\! L\times C/p_{1,\, 3}(D)$,} with $p_{2,\, 3}(E)$ and $p_{1,\, 3}(D)$ being the projections over $L\times C$ of $E$ and $D$ respectively. 
\end{lema}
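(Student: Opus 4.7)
The plan is to combine the standard Mackey formula for biset composition with restriction of scalars along the diagonal embedding $\Delta:C\hookrightarrow C\times C$. The key observation is that $X$ can be viewed as a $(G\times C,\, L)$-biset and $Y$ as an $(L,\, K\times C)$-biset by moving the extra $C$-factors to the outside, so that $X\times_L Y$ is naturally a $(G\times C\times K\times C)$-set; the $(G\times K\times C)$-set $X\times_L^d Y$ is then obtained from it by restricting the action to the subgroup $G\times K\times \Delta(C)$ and identifying the latter with $G\times K\times C$ in the obvious way.

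First, I would apply the ordinary Mackey formula for biset composition to decompose $X\times_L Y$, as a $(G\times C\times K\times C)$-set, into a disjoint union
\begin{displaymath}
X\times_L Y\ \cong\ \bigsqcup_{l\in p_2(E)\backslash L/ p_1(D)} (G\times C\times K\times C)\big/\bigl(\widetilde E\, *\, {}^{(1,l)}\widetilde D\bigr),
\end{displaymath}
where $\widetilde E\leqslant (G\times C)\times L$ and $\widetilde D\leqslant L\times (K\times C)$ are the reorderings of $E$ and $D$, and $*$ is the composition of Notation~2.3.19 in \cite{biset}. Then, for each Mackey component, I would decompose further into $(G\times K\times \Delta(C))$-orbits. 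Using left translation by $G$ and $K$ and the diagonal $C$-action, every such orbit has a representative of the form $(1,1,1,c)\cdot H_l$ with $H_l = \widetilde E *{}^{(1,l)}\widetilde D$. A direct computation of the stabilizer of this representative in $G\times K\times \Delta(C)\cong G\times K\times C$, using the explicit description of $H_l$ and conjugation by $(1,1,1,c)$, gives exactly the subgroup $E *{}^{(l,1,c)}D$ appearing in the statement.

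It then remains to verify that as $l$ varies over $p_2(E)\backslash L/p_1(D)$ and, for each fixed $l$, the parameter $c$ varies over the residual equivalence classes obtained in the previous step, the pair $(l,c)$ ranges over a complete set of representatives of the double cosets $p_{2,3}(E)\backslash L\times C / p_{1,3}(D)$. This is a routine two-stage double-coset manipulation, repackaging a ``Mackey decomposition in $L$, then restriction to diagonal $C$'' into a single Mackey-type formula whose double cosets live inside $L\times C$. I expect the main obstacle to be purely bookkeeping: keeping the left/right action conventions for the various $G$-, $K$-, $L$-, and $C$-factors and all the conjugations consistent, so that the subgroup produced by the stabilizer computation matches $E *{}^{(l,1,c)}D$ on the nose rather than up to an unwanted inversion of $l$ or $c$.
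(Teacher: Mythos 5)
Your plan is correct in outline but follows a genuinely different route from the paper. The paper proves the lemma in one shot, directly imitating the proof of Lemma 2.3.24 of \cite{biset}: it checks that every element $[(g,l,c)E,\,(l',k,c')D]$ lies in the $G\times K\times C$-orbit of $[(1,1,1)E,\,(l^{-1}l',1,c^{-1}c')D]$, so that orbits are parametrized by $p_{2,3}(E)\backslash L\times C/p_{1,3}(D)$ at once, and then computes the stabilizer of $[(1,1,1)E,\,(l,1,c)D]$ to be $E*{}^{(l,1,c)}D$. You instead factor the statement as (ordinary Mackey formula for the $(G\times C,L)$- and $(L,K\times C)$-bisets) followed by (restriction of the resulting $(G\times C\times K\times C)$-set to $G\times K\times\Delta(C)$ and a second orbit decomposition). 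That is a legitimate and even illuminating derivation -- it exhibits the lemma as a consequence of the usual Mackey formula plus a diagonal restriction, rather than as a new computation -- but be aware that the step you label ``routine two-stage double-coset manipulation'' is where essentially all the content of the paper's direct argument reappears: since $p_{2,3}(E)$ and $p_{1,3}(D)$ are generally not product subgroups of $L\times C$, the fibration of $p_{2,3}(E)\backslash (L\times C)/p_{1,3}(D)$ over $p_2(E)\backslash L/p_1(D)$ has fibers given by double cosets in $C$ with respect to $l$-dependent subgroups, and you must match these with the $\Delta(C)$-orbit classes of the cosets $(1,1,1,c)H_l$ and check independence of the chosen representative $l$. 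Also watch the side of the conjugation in the first stage: with $\widetilde D\leqslant L\times(K\times C)$ the Mackey formula conjugates by $(l,1)$, not $(1,l)$. Neither issue is fatal; the paper's single-pass argument simply avoids the reassembly by working with $L\times C$ from the start, at the price of redoing the Bouc orbit count rather than quoting it.
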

\begin{proof}
If $[(g,\, l,\, c)E,\, (l',\, k,\, c')D]$ is an element on the left, then it is easy to see that its orbit under the action of $G\times K\times C$ is the same as the orbit of $[(1,\, 1,\, 1)E,\, (l^{-1}l',\, 1,\, c^{-1}c')D]$. With this observation, the proof that there exists a bijection between the orbits of $(G\times L\times C)/E\times_L^d(L\times K\times C)/D$ under the action of $G\times K\times C$ and 
$p_{2,\, 3}(E)\!\setminus\! L\times C \,/\, p_{1,\, 3}(D)$, is analogous to the proof of  Lemma 2.3.24 in Bouc \cite{biset}.
Similarly, the stabilizer of $[(1,\, 1,\, 1)E,\, (l,\, 1,\, c)D]$ equals $E*{}^{(l,\, 1,\, c)}D$.
\end{proof}

We will proceed now to prove:

\begin{prop}
\label{rbcsim}
If $C$ is a group of prime order, then the simple $RB_C$-modules are in one-to-one correspondence with the equivalence classes of couples $(H,\, V)$ where $H$ is a group 
and $V$ is a simple $\hat{RB_C}(H)$-module.
\end{prop}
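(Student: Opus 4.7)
The plan is to deduce the result from Proposition \ref{isosim}: it suffices to show that, for every simple $RB_C$-module $S$, all minimal groups for $S$ are mutually isomorphic. Once this is done, the parameter set of equivalence classes $(H,V)$ with $\hat{RB_C}(H)\neq 0$ and $V$ a simple $\hat{RB_C}(H)$-module coincides with the set appearing in the statement of the proposition, and Proposition \ref{isosim} provides the bijection.

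To establish the uniqueness of the minimal group up to isomorphism, fix a simple $RB_C$-module $S$ and suppose $H,H'$ are both minimal for $S$, so $|H|=|H'|$ and $S(H),S(H')\neq 0$. Picking $0\neq s\in S(H)$, simplicity together with Proposition \ref{catmod} furnishes a morphism $\alpha\in Hom_{\cp_{RB_C}}(H,H')=RB(H'\times H\times C)$ with $S(\alpha)(s)\neq 0$. By linearity we may suppose $\alpha=(H'\times H\times C)/E$ for some $E\leq H'\times H\times C$. The goal is to prove that, unless $E$ is the stabiliser of a group isomorphism $H\cong H'$ in the sense of Notation \ref{notaiso}, $\alpha$ decomposes in $\cp_{RB_C}$ as $\beta\times^d\gamma$ through a group $K$ with $|K|<|H|$, which would contradict minimality of $H$.

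A first reduction shows that if $p_1(E)<H'$ or $p_2(E)<H$, then $\alpha$ already factors through $p_1(E)$ or $p_2(E)$ in $\cp_{RB_C}$: this is verified directly using Lemma \ref{macbh} applied to the corresponding induction/restriction-type $3$-sets. So we assume $p_1(E)=H'$ and $p_2(E)=H$. Since $|C|$ is prime, $p_3(E)\in\{1,C\}$; accordingly $E$ is either supported in $H'\times H\times 1$ or is an extension of $C$ by $E_0:=E\cap(H'\times H\times 1)$. In each case, Lemma \ref{macbh} combined with an adaptation of Bouc's decomposition for subgroups of $H'\times H\times C$ yields a further factorisation of $\alpha$ through a subquotient of $H$; equality of orders then forces this subquotient to be $H$ itself, and the corresponding decomposition reduces to a group isomorphism $H\to H'$. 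The main obstacle is the case $p_3(E)=C$: here the diagonal $C$-action of the composition $\times^d$ is entangled with the biset part of $E$, and one must bookkeep the composition formula $F_1*{}^{(l,1,c)}F_2$ of Lemma \ref{macbh} with care to rule out non-trivial twists that would prevent the identification of $E$ with a $C$-twisted graph of a group isomorphism $H\to H'$. Once this structural analysis is completed, the uniqueness of the isomorphism class of minimal groups follows, and Proposition \ref{isosim} yields the classification of simple $RB_C$-modules.
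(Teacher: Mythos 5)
Your overall strategy is the same as the paper's: reduce to Proposition \ref{isosim} by proving that the minimal groups of a simple $RB_C$-module form a single isomorphism class, and obtain that from a structural analysis of transitive $(H'\times H\times C)$-sets that do not factor through a group of order less than $|H|$. The problem is that the decisive step is not actually carried out. After your first reduction you write that Lemma \ref{macbh} ``combined with an adaptation of Bouc's decomposition \dots yields a further factorisation,'' that the case $p_3(E)=C$ requires careful bookkeeping of the terms $F_1*{}^{(l,1,c)}F_2$, and that ``once this structural analysis is completed'' the result follows. That is a description of the difficulty, not a proof, and it is exactly the point where the primality of $|C|$ must enter: the paper's closing example (with $|C|=4$, $G$ quaternion, $H$ dihedral of order $8$) shows the claim is false for general $C$, so any proof that does not visibly use primality here is incomplete. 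The paper's actual argument is short and avoids the bookkeeping you anticipate: the lemma preceding Corollary \ref{factor} shows that $(H'\times H\times C)/E$ factors through both $p_1(E)/k_1(E)$ and $p_2(E)/k_2(E)$, so non-factorization forces not only $p_1(E)=H'$ and $p_2(E)=H$ (which is all your reduction records) but also $k_1(E)=k_2(E)=1$. Then $H'\cong p_{2,3}(E)/k_{2,3}(E)$ yields a surjection $p_{2,3}(E)\twoheadrightarrow H'$, and since $|C|$ is prime, $p_{2,3}(E)$ is either $H\times C_1$ with $C_1\leqslant C$ or the graph of a surjection $H\rightarrow C$; in both cases one extracts a homomorphism $H\rightarrow H'$ whose kernel lands in $k_2(E)=1$, hence an isomorphism. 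No identification of $E$ as a ``$C$-twisted graph'' is needed.

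A second omission: Proposition \ref{isosim} parametrizes simple modules by couples $(H,V)$ with $\hat{RB_C}(H)\neq 0$, whereas the statement you are proving ranges over all groups $H$. The paper closes this gap with Lemma \ref{hatRB}, showing $\hat{RB_C}(G)\neq 0$ for every $G$ (via the sets $(G\times G\times C)/D_{\theta,\zeta}$ and an order count using the kernel of $\rho_{A,B}$). Strictly speaking the two parameter sets coincide even without this, since a zero algebra has no simple modules, but your write-up should at least say so; as it stands the passage from the conclusion of Proposition \ref{isosim} to the conclusion as stated is silently assumed.
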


The proof will be given by Corollary \ref{factor} and Lemma \ref{hatRB}. The first will tell us that $RB_C$ with $C$ of primer order satisfies the hypothesis of Proposition \ref{isosim}, that is, every simple $RB_C$-module has isomorphic minimal groups. It is not hard to see that this hypothesis on a functor $A$ is fulfilled if $A$ has the following property: Suppose $G$ and $H$ are groups of order $n$. Let $I_n$ be the submodule of $A(G\times H)$ generated by elements of the form $a\circ b$ with $a\in A(G\times K)$, $b\in A(K\times H)$ and $K$ a group of order smaller than $n$. If $G$ and $H$ are not isomorphic, then $A(G\times H)=I_n$.  
All the functors we have considered have this property and we will prove next that $RB_{C}$ with $C$ of prime order has it.  
Nonetheless, we will also see that this property cannot be extended to an arbitrary group $C$. For  instance, if $C$ is a cyclic group of order 4, we will find two non isomorphic groups $G$ and $H$ of order 8 and an element $X\in B_C(G\times H)$ such that $X$ does not factor through a group of order smaller than $8$.

Until otherwise is stated, we may suppose $C$ is any group.

\begin{nota}
Let $D$ be a subgroup of $H\times K\times C$. We will write $p_1(D)$, $p_2(D)$ and $p_3(D)$ for the projections of $D$ in $H$, $K$ and $C$ respectively; $p_{1,\, 2}(D)$ will denote the  projection over $H\times K$, and in the same way we define the other possible combinations of indices. We write $k_1(D)$ for
$\{h\in p_1(D)\mid  (h,\, 1,\,1)\in D\}$ which is a normal subgroup of $p_1(D)$. Similarly, we define $k_2(D)$, $k_3(D)$ and $k_{i,\, j}(D)$ for all possible combinations of $i$ and $j$.
\end{nota}

\begin{lema}
If $D$ is a subgroup of $H\times K\times C$, then as $(H\times K\times C)$-sets, \mbox{$(H\times K\times C)/D$}  is isomorphic to 
\begin{itemize}
 \item[i)] $X\times^d_{D_1} Y$ where $D_1=p_1(D)/k_1(D)$,
 for some $X\in RB_C(H\times D_1)$ and 
 $Y\in RB_C(D_1\times K)$.
\item[ii)] $W\times^d_{D_2} Z$ where $D_2=p_2(D)/k_2(D)$,
 for some $W\in RB_C(H\times D_2)$ and 
 $Z\in RB_C(D_2\times K)$.
\end{itemize}
\end{lema}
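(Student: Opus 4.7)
The plan is to build the factorizations explicitly via a Goursat-style decomposition adapted to the three coordinates $H$, $K$, $C$, and to show that the required composite reduces to a single term in the Mackey formula of Lemma \ref{macbh}.

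For (i), let $q\colon p_1(D) \twoheadrightarrow D_1 = p_1(D)/k_1(D)$ be the canonical projection. Since $k_1(D) \trianglelefteq p_1(D)$, the subgroup $k_1(D)\times\{1\}\times\{1\}$ is normal in $D$, so
\[
\bar{D} = \{(q(h), k, c) : (h, k, c) \in D\} \leq D_1 \times K \times C
\]
is a well-defined subgroup. I also introduce the ``graph''
\[
E = \{(h, q(h), c) : h \in p_1(D),\ c \in C\} \leq H \times D_1 \times C,
\]
which is visibly closed under multiplication. I then take $X = (H\times D_1\times C)/E$ and $Y = (D_1\times K\times C)/\bar{D}$, viewed as elements of $RB_C(H\times D_1)$ and $RB_C(D_1\times K)$ respectively, and apply Lemma \ref{macbh} to compute $X\times^d_{D_1} Y$.

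The crucial numerical input is that $p_{2,3}(E) = D_1\times C$, so the double coset space $p_{2,3}(E)\backslash D_1\times C / p_{1,3}(\bar{D})$ is a single point. Lemma \ref{macbh} therefore collapses to $X\times^d_{D_1} Y \cong (H\times K\times C)/(E * \bar{D})$, and it remains to check $E * \bar{D} = D$. One inclusion is immediate: given $(h,k,c) \in D$, the element $q(h) \in D_1$ satisfies $(h, q(h), c) \in E$ and $(q(h), k, c) \in \bar{D}$. Conversely, if $(h, k, c) \in E * \bar{D}$ then $h \in p_1(D)$ and there exists $(h', k, c) \in D$ with $q(h') = q(h)$; hence $h'^{-1}h \in k_1(D)$, so $(h'^{-1}h, 1, 1) \in D$, and multiplying puts $(h, k, c)$ in $D$.

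Part (ii) is entirely symmetric: using $r\colon p_2(D) \twoheadrightarrow D_2$, define the ``graph''
\[
F = \{(r(k), k, c) : k \in p_2(D),\ c \in C\} \leq D_2 \times K \times C
\]
and
\[
\bar{D}' = \{(h, r(k), c) : (h, k, c) \in D\} \leq H \times D_2 \times C,
\]
take $W = (H\times D_2\times C)/\bar{D}'$ and $Z = (D_2\times K\times C)/F$, and argue as before. The only step requiring care is confirming that the Mackey formula reduces to a single orbit; this is precisely why the $C$-coordinate in $E$ and $F$ is taken to be the full group $C$ rather than $p_3(D)$. Once that surjectivity is in place the rest is a routine unravelling of definitions.
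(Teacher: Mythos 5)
Your proof is correct and follows essentially the same route as the paper: the graph subgroup $E$ is the paper's $U$, your $\bar D$ coincides with the paper's $V$ (the paper merely describes it via the Goursat isomorphism $p_{2,3}(D)/k_{2,3}(D)\cong D_1$), and both arguments reduce Lemma \ref{macbh} to a single double coset and then verify $E*\bar D=D$ directly. No issues.
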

\begin{proof}
 The proof of point 2 in Proposition 2.3.25 of \cite{biset}, gives us the following group isomorphisms
\begin{displaymath}
 \frac{p_1(D)}{k_1(D)}\cong \frac{p_{2,\, 3}(D)}{k_{2,\, 3}(D)}\quad \textrm{and}\quad \frac{p_2(D)}{k_2(D)}\cong \frac{p_{1,\, 3}(D)}{k_{1,\, 3}(D)},
\end{displaymath}
the first one sends $ak_1(D)$ to $(b,\, c)k_{2,\, 3}(D)$ if $(a,\, b,\, c)$ is in $D$, the second one sends $bk_2(D)$ to $(a,\, c)k_{1,\, 3}(D)$.

Now let $X=(H\times D_1\times C)/U$ with
\begin{displaymath}
 U=\{(h,\, \alpha(h))\mid h\in p_1(D),\ \alpha:p_1(D)\twoheadrightarrow D_1 \}\times C.
\end{displaymath}
Let $\sigma$ be the isomorphism from $D_{2,\, 3}:=p_{2,\, 3}(D)/k_{2,\, 3}(D)$ to $D_1$, we define $Y=(D_1\times K\times C)/V$ with 
\begin{displaymath}
 V=\{(\sigma(\beta(k,\, c)),\,k,\, c)\mid (k,\, c)\in p_{2,\, 3}(D),\ \beta: p_{2,\, 3}(D)\twoheadrightarrow D_{2,\,3}\}.
\end{displaymath}
By Lemma \ref{macbh} we have
\begin{displaymath}
 X\times^d_{D_1}Y\cong \bigsqcup_{\substack{(t,\, c)\ \textrm{in}\\ [p_{2,\, 3}(U)\setminus D_1\times C/p_{1,\, 3}(V)]}} (H\times K\times C)/(U*{}^{(t,\, 1,\, c)}V).
\end{displaymath}
Since $p_{2,\,3}(U)=D_1\times C$, this union reduces to the element $(H\times K\times C)/U*V$. Now, $U*V$ is by definition
\begin{displaymath}
 \{(h,\, k,\, c)\mid \exists t\in D_1\textrm{ s. t. } (h,\, t,\, c)\in U,\, (t,\, k,\, c)\in V\},
\end{displaymath}
so $(h,\, k,\, c)$ is in $U*V$ if and only if $\alpha (h)=\sigma(\beta(k,\, c))$, which happens if and only if $(h,\, k,\, c)$ is in $D$.

Second decomposition is obtained in a similar way.
\end{proof}

\begin{coro}
\label{factor}
Let $C$ be a group of prime order and $H$ and $K$ be groups of order $n$. If there exists a transitive $(H\times K\times C)$-set $X$ that does not factor through a group of order smaller than $n$, then $G$ and $H$ are isomorphic.
\end{coro}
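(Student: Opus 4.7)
Write $X=(H\times K\times C)/D$ for some subgroup $D\leqslant H\times K\times C$. The plan is to apply the preceding lemma in both of its forms to constrain $D$ severely, and then to read off $H\cong K$ from Goursat's lemma.

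Each form of the preceding lemma writes $X$ as a composition through one of the groups $D_1:=p_1(D)/k_1(D)$ or $D_2:=p_2(D)/k_2(D)$. Since $X$ does not factor through a group of order smaller than $n$, we must have $|D_1|\geqslant n$ and $|D_2|\geqslant n$; combined with the obvious bounds $|D_i|\leqslant n$, this forces the four conditions
\[
p_1(D)=H,\quad k_1(D)=1,\quad p_2(D)=K,\quad k_2(D)=1,
\]
which are precisely what Goursat's lemma needs in order for a subgroup of $H\times K$ to be the graph of an isomorphism. The remainder of the proof is a case analysis on $p_3(D)$ and $k_3(D)$, both of which lie in $\{1,C\}$ because $|C|$ is prime, aimed at reducing to this situation.

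If $p_3(D)=1$, then $D\leqslant H\times K\times\{1\}$ and Goursat applies directly to produce an isomorphism $H\to K$. If $p_3(D)=C$ but $k_3(D)=C$, then $(1,1,c)\in D$ for every $c\in C$, so $D=p_{1,2}(D)\times C$; the subgroup $p_{1,2}(D)\leqslant H\times K$ still has trivial $k_1,k_2$ and full projections onto $H$ and $K$, and Goursat again yields $H\cong K$.

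The main obstacle is the remaining subcase, $p_3(D)=C$ and $k_3(D)=1$. Here the Goursat isomorphism applied to $D\leqslant (H\times K)\times C$ gives $p_{1,2}(D)/k_{1,2}(D)\cong C$, so $|p_{1,2}(D)|=p\cdot|k_{1,2}(D)|$. The subgroup $k_{1,2}(D)\leqslant H\times K$ inherits $k_1=k_2=1$ from $D$, hence by Goursat it is the graph of an isomorphism between subgroups $H_0\leqslant H$ and $K_0\leqslant K$; in particular $|k_{1,2}(D)|=|H_0|$ divides $n$. Since $p_{1,2}(D)$ surjects onto both $H$ and $K$, we also have $p|H_0|\geqslant n$. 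A short divisibility argument exploiting the primality of $p$ then forces $|H_0|\in\{n,\,n/p\}$: in the first case $k_{1,2}(D)$ itself is the graph of the desired isomorphism $H\to K$, and in the second $|p_{1,2}(D)|=n$, so $p_{1,2}(D)$ projects isomorphically onto each of $H$ and $K$. Either way $H\cong K$, completing the argument.
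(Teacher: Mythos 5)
Your argument reaches the same four Goursat conditions $p_1(D)=H$, $p_2(D)=K$, $k_1(D)=k_2(D)=1$ that open the paper's proof, but from there the two routes diverge. The paper uses the isomorphism $p_1(D)/k_1(D)\cong p_{2,3}(D)/k_{2,3}(D)$ already exhibited in the preceding lemma to get a surjection $\alpha:p_{2,3}(D)\twoheadrightarrow H$, and then classifies the subgroup $p_{2,3}(D)\leqslant K\times C$ (which projects onto $K$) using the primality of $|C|$: it is either $K\times C_1$ or the graph of a homomorphism $K\to C$, and in either case $\alpha$ yields an injection $K\to H$, which is an isomorphism by equality of orders. You instead apply Goursat to $D\leqslant (H\times K)\times C$ and split on $(p_3(D),k_3(D))$. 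The cases $p_3(D)=1$ and $k_3(D)=C$ are handled correctly and immediately; the paper's version is shorter overall because the surjection onto $H$ is handed to it by the lemma, while yours stays entirely at the level of subgroups of $H\times K\times C$ and makes very explicit where primality of $|C|$ enters (it is exactly what disposes of the middle case).

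One step in your remaining case needs repair. From the surjection $p_{1,2}(D)\twoheadrightarrow H$ you only record the inequality $p|H_0|\geqslant n$, and together with $|H_0|\mid n$ this does \emph{not} force $|H_0|\in\{n,\,n/p\}$: take $n=30$, $p=5$, $|H_0|=10$, where $10\mid 30$ and $50\geqslant 30$ but $10\notin\{30,6\}$. The fact you actually need is the divisibility $n\mid |p_{1,2}(D)|=p|H_0|$, which the very same surjection provides (first isomorphism theorem); combined with $|H_0|\mid n$ it gives $[H:H_0]\mid p$, hence $[H:H_0]\in\{1,\,p\}$, and your two terminal subcases then go through as written. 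With that substitution the proof is complete. (The hypothesis ``$G$ and $H$ are isomorphic'' in the statement is a typo for ``$H$ and $K$'', which both you and the paper silently correct.)
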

\begin{proof}
Let $X=(H\times K\times C)/D$. If it does not factor through a group of order smaller than $n$, then we must have $p_1(D)=H$, $p_2(D)=K$, $k_1(D)=1$ and $k_2(D)=1$.
From this we obtain that there exists a surjective morphism $\alpha:p_{2,\,3}(D)\rightarrow H$. If $C$ is a group of prime order, we have only two choices for $p_{2,\, 3}(D)$.
If $p_{2,\, 3}(D)=K\times C_1$  for $C_1\leqslant C$, then $\alpha(k,\, c)=\alpha_1(k)\alpha_2(c)$, where $\alpha_1$ is a morphism from $K$ to $H$. If $k$ is in $Ker\alpha_1$, then taking $(k,\, 1)$ in $p_{2,\, 3}(D)$ we have $\alpha(k,\, 1)=1$, hence $k\in k_2(D)=1$. So we have that $\alpha_1$ is injective and $H$ is isomorphic to $K$. If $p_{2,\, 3}(D)=\{(k,\, t(k))\mid t:K\rightarrow C\}$, then again through $\alpha$ we can define a surjective homomorphism from  $K$ to $H$ and so they are isomorphic. 
\end{proof}

This allows us to see that if $S$ is a simple $RB_C$-module for $C$ of prime order and $H$ and $K$ are two minimal groups for $S$, then they are isomorphic. First, they have the same order $n$. Also,
if $A=RB_C$ and $V=S(H)$, then $S\cong S^A_{H,\, V}$ and $S(K)\neq 0$. Hence there exists a transitive  element $X$ in $RB_C(H\times K)$ such that $S(X)\neq 0$ and in particular $X$ does not factor through a group of order smaller than $n$.

For the proof the next lemma we will use the following observations. Here $C$ is again any group.

Let $G$, $H$ and $K$ be any groups. Take $A\leqslant G\times K\times C$ and $B\leqslant K\times H\times C$. For $A*B$ we can define the following morphism
\begin{displaymath}
\rho_{A,\, B}: A*B\rightarrow (p_2(A)\cap p_1(B))/(k_2(A)\cap k_1(B))\quad (a,\, b,\, c)\mapsto t(k_2(A)\cap k_1(B)) 
\end{displaymath}
where $t$ is an element in $K$ such that $(a,\, t,\, c)\in A$ and $(t,\, b,\,c)\in B$ (that exists for each $(a,\, b,\, c)$ in $A*B$), it is clearly unique modulo $k_2(A)\cap k_1(B)$. The kernel of this morphism is
\begin{displaymath}
 N=\{(a,\, b,\, c)\in G\times H\times C\mid (a,\,1,\, c)\in A,\, (1,\, b,\, c)\in B\}.
\end{displaymath}

Suppose now that $A*B=D$ satisfies $p_1(D)=G$, $p_2(D)=H$, $k_1(D)=1$ and $k_2(D)=1$. Then there exists $\alpha:p_{2,\, 3}(D)\rightarrow G$ such that $D=\{(\alpha(h,\, c),\, h,\, c)\mid (h,\, c)\in p_{2,\, 3}(D)\}$. In this case, the kernel $N$ of $\rho_{A,\, B}$ is of the form
\begin{displaymath}
\{(\alpha(f_1(w),\, w),\, f_1(w),\, w)\mid w\in C'\leqslant C, f_1:C'\rightarrow H\}.
\end{displaymath}
To see this, take $(r,\, s,\, q)$ in $N$. First we must have $r=\alpha(s,\,q)$. Also, if there is another $s'$ such that $(\alpha(s',q),\, s',\, q)$ is in $N$, then $(1,\, s's^{-1},\, 1)$ is in $B$, but taking $(1,\, 1,\, 1)$ in $A$ we have $(1,\, s's^{-1},\, 1)$ in $D$, hence $s=s'$. So we can define a morphism $f_1$ from $C'=p_3(D)$ to $H$, and $N$ has the desired form.

\begin{lema}
\label{hatRB}
For any groups $G$ and $C$ we have $\hat{RB_C}(G)\neq 0$.
\end{lema}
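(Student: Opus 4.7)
The plan is to exhibit an explicit basis element of $RB_C(G\times G)=RB(G\times G\times C)$ whose image in $\hat{RB_C}(G)$ is nonzero. The natural candidate is the transitive $(G\times G\times C)$-set $X_G:=[(G\times G\times C)/(\Delta(G)\times C)]$, which, up to the identifications of the preceding discussion, is the identity morphism of $G$ in $\cp_{RB_C}$.

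First I would reduce to a combinatorial statement about conjugacy classes of subgroups of $G\times G\times C$. If $a=[(G\times K\times C)/A]$ and $b=[(K\times G\times C)/B]$ are transitive basis elements with $|K|<|G|$, then Lemma \ref{macbh} gives
\begin{displaymath}
a\circ b=\bigsqcup_{(l,c)}[(G\times G\times C)/(A*{}^{(l,1,c)}B)].
\end{displaymath}
Hence $I_{RB_C}(G)$ is contained in the $R$-submodule spanned by those basis elements $[(G\times G\times C)/E]$ with $E$ conjugate in $G\times G\times C$ to some $A*{}^{(l,1,c)}B$ with $|K|<|G|$. Since distinct conjugacy classes of subgroups yield linearly independent basis elements, it suffices to show that $\Delta(G)\times C$ is not $(G\times G\times C)$-conjugate to any such $A*{}^{(l,1,c)}B$.

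To derive the contradiction, suppose $D=A*{}^{(l,1,c)}B$ were conjugate to $\Delta(G)\times C$. The invariants $p_1,p_2,k_1,k_2$ and the cardinality of a subgroup of $G\times G\times C$ are preserved under conjugation, so
\begin{displaymath}
p_1(D)=p_2(D)=G,\qquad k_1(D)=k_2(D)=1,\qquad |D|=|G|\,|C|.
\end{displaymath}
The structural analysis carried out immediately before the lemma now applies: the morphism $\rho_{A,{}^{(l,1,c)}B}$ has image a subquotient of $K$, hence of order at most $|K|$, and its kernel $N$ has the parametrized form $\{(\alpha(f_1(w),w),f_1(w),w)\mid w\in C'\leq C\}$, so $|N|=|C'|\leq|C|$. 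Combining these,
\begin{displaymath}
|G|\,|C|=|D|=|N|\cdot|\mathrm{im}\,\rho|\leq|C|\cdot|K|,
\end{displaymath}
which forces $|K|\geq|G|$, contradicting $|K|<|G|$. Hence $X_G\notin I_{RB_C}(G)$, so its class is nonzero in $\hat{RB_C}(G)$.

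The only delicate point is confirming that $p_1,p_2,k_1,k_2$ and $|D|$ really are invariant under $(G\times G\times C)$-conjugation, which follows at once from the equivariance of the projection maps. The rest of the work has already been done in the structural paragraph preceding the lemma; this proof merely packages those estimates into a counting argument.
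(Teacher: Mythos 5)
Your proof is correct and follows essentially the same route as the paper's: both reduce via Lemma \ref{macbh} to showing a specific subgroup of $G\times G\times C$ with $p_1=p_2=G$, $k_1=k_2=1$ cannot equal (or be conjugate to) any $A*{}^{(l,1,c)}B$ with $|K|<|G|$, and both derive the contradiction from the order count $|D|=|N|\cdot|\mathrm{im}\,\rho_{A,B}|\leq |C|\,|K|$. The only differences are cosmetic: you use the single element $(G\times G\times C)/(\Delta(G)\times C)$ where the paper uses the family $D_{\theta,\zeta}$, and you spell out the conjugacy/linear-independence step that the paper leaves implicit.
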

\begin{proof}
Let $\theta$ be an automorphism of $G$ and $\zeta:C\rightarrow Z(G)$ 
be a morphism of groups. We define
$ D_{\theta,\, \zeta}=\{(\theta(g)\zeta(c),\, g,\, c)\mid (g,\, c)\in G\times C\}$,
and prove that  $X_{\theta,\, \zeta}=(G\times G\times C)/D_{\theta,\, \zeta}$ 
is different from $0$ in the quotient $\hat{RB_C}(G)$.

Suppose there exists $K$ of order smaller than $|G|$ such that $(G\times G\times C)/D$ is in $RB_C(G\times K)\times^d_K RB_C(K\times G)$. Then by Lemma \ref{macbh}, there exist $A\leqslant G\times K\times C$
and $B\leqslant K\times H\times C$, such that $D_{\theta,\, \zeta}=A*B$. According to the previous observations, if $N$ is the kernel of the morphism $\rho_{A,\, B}$, then $(A*B)/N$ has order $|G|[C:C_1]$ for a subgroup $C_1$ of $C$.
This is a contradiction, since $|(A*B)/N|$ must also divide $|K|$.
\end{proof}

Finally, we present the following example, showing that the property of Corollary \ref{factor} cannot be extended to an arbitrary group $C$.

\begin{ejem}
Let $C=<c>$ be a group of order 4, $G$ the quaternion group
\begin{displaymath}
<x,\, y\mid x^4=1,\ yxy^{-1}=x^{-1},\ x^2=y^2>
\end{displaymath}
and $H$ the dihedral group of order 8
\begin{displaymath}
<a,\, b\mid a^4=b^2=1,\ bab^{-1}=a^{-1}>.
\end{displaymath}
We will find $D$ a subgroup of $G\times H\times C$ such that the corresponding element in $B(G\times H\times C)$ does not factor through a group  of order smaller than 8.

Consider $T_1=<(a,\, c^2)>$ and $T_2=<(b,\, c)>$, subgroups of $H\times C$ of order 4. It is easy to observe that $T_1$ is a normal subgroup, so $T=T_1T_2$ is a subgroup of $H\times C$. Moreover, $T_1\cap T_2=\{1\}$ and if $\alpha=(a,\, c^2)$ and $\beta=(b,\, c)$, then $\beta\alpha\beta^{-1}=\alpha^{-1}$, so $T=T_1\rtimes T_2$. Finally, there is clearly a surjective morphism $\tau: T\rightarrow G$ whose kernel is the subgroup of order 2 generated by $\alpha^2\beta^2$. So, we define 
$D=\{(\tau(t),\, t)\mid t\in T\}$.

Observe that $p_1(D)=G$, $p_2(D)=H$, $k_1(D)=\{1\}$ and $k_2(D)=\{1\}$.

Suppose there exists $K$ of order smaller than 8 and $A\leqslant G\times K\times C$
and $B\leqslant K\times H\times C$, such that $D=A*B$. Consider the morphism $\rho_{A,\, B}$. From the observations before Lemma \ref{hatRB} we have that the kernel $N$ can only have order 1, 2 or 4. Since $[D:N]$ must also be smaller than 8, we conclude that $N$ has order 4 and it is generated by an element of the form $(r,\,s,\, c)\in D$. There are only four elements of this form in $D$, and none of them generates a normal subgroup, so we have a contradiction.
\end{ejem}

\section*{Acknowledgements}

I would like to thank Serge Bouc and Peter Webb for enlightening conversations, and Da\u ghan Yayl\i o\u glu for his help with GAP.

I also would like to thank the following institutions, for their financial support: 
ECOS - ANUIES - CONACYT for project ``Biconjuntos y funtores \mbox{asociados}'' and
Fundaci\'on Sof\'ia Koval\'evskaia - Sociedad Matemática Mexicana.

\bibliographystyle{plain}
\bibliography{amod2}

\end{document}